\algnewcommand\Input{\\ \textbf{Input: }}
\newcommand{\argmin}{\operatornamewithlimits{\mathrm{arg\,min}}}
\newcommand{\Exp}{\operatorname{\mathrm{Exp}}}
\crefname{hypothesis}{Hypothesis}{Hypotheses}
\title{The ultimate upper bound on the injectivity radius of the Stiefel manifold\thanks{Submitted to the editors March 7, 2024.
\funding{This work was supported by the Fonds de la Recherche Scientifique-FNRS under Grant no T.0001.23. Simon Mataigne is a Research Fellow of the FNRS.}}}
\author{P.-A. Absil\thanks{ICTEAM Institute, UCLouvain, B-1348 Louvain-la-Neuve, Belgium 
  (\email{pa.absil@uclouvain.be}, \email{simon.mataigne@uclouvain.be}, \url{http://sites.uclouvain.be/absil/}).}
\and Simon Mataigne\footnotemark[2]}
\begin{document}

\maketitle

\begin{abstract}
    We exhibit conjugate points on the Stiefel manifold endowed with any member of the family of Riemannian metrics introduced by H\"uper et al.~(2021). This family contains the well-known canonical and Euclidean metrics. An upper bound on the injectivity radius of the Stiefel manifold in the considered metric is then obtained as the minimum between the length of the geodesic along which the points are conjugate and the length of certain geodesic loops. Numerical experiments support the conjecture that the obtained upper bound is in fact equal to the injectivity radius.
\end{abstract}

\begin{keywords}
    Stiefel manifold, canonical metric, Euclidean metric, conjugate points, geodesic loops, injectivity radius
\end{keywords}

\begin{MSCcodes}
    15B10, 
    15B57,
    65F99,
    53C22,
    53C30
\end{MSCcodes}

\section{Introduction}
\label{sec:introduction}

The \emph{injectivity radius} of a complete Riemannian manifold is the largest $r$ such that every geodesic of length less than or equal to $r$ is a shortest curve between its endpoints. Equivalent definitions can be found in Riemannian geometry textbooks such as~\cite{dC92,Sak96}.

The knowledge of the injectivity radius is an asset in several computational problems on manifolds. If, between two given points, one finds a geodesic whose length is less than or equal to the injectivity radius, then the length of the geodesic gives the Riemannian distance between the two points. The injectivity radius also defines the largest ball on which geodesic normal coordinates may be used. 
The injectivity radius is involved, e.g., in optimization~\cite{ABG2007-FoCM,AfsariTronVidal2013,Bonnabel2013,HosseiniUschmajew2017,SunFlammarionFazel2019}, consensus~\cite{TronAfsariVidal2013}, statistics~\cite{Afsari2011,ArnaudonNielsen2013,GuiguiMiolanePennec2023}, and data fitting~\cite{AbsGouStrWir2016,GouMasAbs2018}.

The \emph{Stiefel manifold} of orthonormal $p$-frames in $\mathbb{R}^n$, denoted by $\mathrm{St}(n,p)$, is one of the most classical matrix manifolds~\cite{EAS98,AMS2008,boumal2023intromanifolds}. It appears in numerous applications; see for example~\cite{TurVeeSri2011,WenYin2012,BoulangerSaidBihanManton2016,ChakrabortyVemuri2019,ChenMaSoZhang2020,LiJiaWenLiuTao2021,sutti2023shooting}. In particular, the endpoint geodesic problem on the Stiefel manifold has attracted a growing interest; see~\cite{ZimmermannHueper2022,sutti2023shooting} and references therein. 

It may thus seem paradoxical that the injectivity radius of the Stiefel manifold remained unknown at the time of submitting this paper. This has to do with the fact that the cut locus of Riemannian manifolds can have an intricate structure~\cite{ItohTanaka1998}. 

In this paper, we report the discovery of conjugate points on the Stiefel manifold $\mathrm{St}(n,p)$, with $2 \leq p \leq n-2$, for all members of the one-parameter ($\beta>0$) family of Riemannian metrics~\eqref{eq:g} introduced by H\"uper et al.~\cite{HuperMarkinaLeite2021}, which includes the well-known canonical ($\beta=\frac12$) and Euclidean ($\beta=1$) metrics defined by Edelman et al.~\cite{EAS98}. 
The length of the geodesic along which the points are conjugate gives an upper bound on the injectivity radius, specifically $t^{\mathrm{r}}_\beta\sqrt2$ where $t^{\mathrm{r}}_\beta$ is the smallest positive root of $\frac{\sin t}{t} + \frac{1-\beta}{\beta} \cos t$. For a range of values of $\beta$ (see~\eqref{eq:inj-beta}), this bound is further improved by considering the length of geodesic loops generated by Givens rotations. The resulting $\beta$-dependent upper bound on the injectivity radius is given in Theorem~\ref{thm:inj}.
In particular, for the canonical metric, the upper bound is $t^{\mathrm{r}}\sqrt2 \approx 0.91326189159122 \, \pi$, where $t^{\mathrm{r}}$ is the smallest positive root of $\frac{\sin t}{t} + \cos t$. For the Euclidean metric, the upper bound is $\pi$.   

As for the word ``ultimate'' in the title of this paper, it is meant to convey that, according to numerical experiments, the obtained upper bound appears to be \emph{equal} to the injectivity radius for all $\beta$ (Conjecture~\ref{conj:inj}). 
At the time of submitting this paper, we were not aware of a proof of the conjecture for any value of $\beta$. Shortly thereafter, it was proved in the eprint~\cite{zimmermann2024injectivity} that the injectivity radius of the Stiefel manifold under the Euclidean metric is $\pi$, thereby proving the conjecture for $\beta = 1$.

The paper is organized as follows. After background information on the injectivity radius of Riemannian manifolds (section~\ref{sec:inj-Riemannian}) and the Stiefel manifold (section~\ref{sec:Stiefel}), section~\ref{sec:lemmas} proves linear algebra lemmas that lead in section~\ref{sec:conjugate} to an upper bound on the conjugate radius. Together with upper bounds on the length of geodesic loops (section~\ref{sec:loops}), this produces an upper bound on the injectivity radius (section~\ref{sec:inj-upper}). The conjecture that the upper bound is equal to the injectivity radius is supported by numerical experiments in section~\ref{sec:numerical}.

\section{The injectivity radius of a complete Riemannian manifold}
\label{sec:inj-Riemannian}

This section gives a concise overview, based on~\cite{Sak96,CE75}, of the concept of injectivity radius. The necessary background in Riemannian geometry (e.g., tangent spaces, induced norm, induced distance, geodesics, and minimal geodesics, also called minimizing geodesics) can be found in introductory textbooks such as~\cite{dC92}.

Let $\mathcal{M}$ be a complete Riemannian manifold with Riemannian metric $g$ and induced distance $d$. The \emph{exponential map} at $x$ is 
\[
    \mathrm{Exp}_x: \mathrm{T}_x\mathcal{M} \to \mathcal{M} : \xi \mapsto \gamma_\xi(1),
\]
where $\mathrm{T}_x\mathcal{M}$ denotes the tangent space to $\mathcal{M}$ at $x$ and $\gamma_\xi:[0,\infty) \to \mathcal{M}$ is the geodesic of $\mathcal{M}$ satisfying  $\gamma(0)=x$ and $\gamma'(0)=\xi$. It is customary to write $\Exp(\xi)$ instead of $\Exp_x(\xi)$ when it is clear that $\xi$ belongs to $\mathrm{T}_x\mathcal{M}$. 

Let $\mathrm{U}_x\mathcal{M} = \{\xi \in \mathrm{T}_x\mathcal{M} \mid g_x(\xi,\xi) = 1\}$ denote the unit tangent space at $x\in\mathcal{M}$. Given $\xi \in \mathrm{U}_x\mathcal{M}$, the \emph{cut time} of $\gamma_\xi$ is 
\[
    t_{\mathrm{c}}(\xi) := \sup\{t>0 \mid d(x,\gamma_\xi(t)) = t\}.
\]
When $t_{\mathrm{c}}(\xi)<\infty$, it is the last value of $t$ such that $\gamma_\xi|_{[0,t]}$ is minimal; see~\cite[III.4.1]{Sak96}. In other words, it is the time beyond which $\gamma_\xi$ ceases to be minimal. In that case, $t_{\mathrm{c}}(\xi)\xi$, resp.\ $\gamma_\xi(t_{\mathrm{c}}(\xi))$, is called the \emph{tangent cut point}, resp.\ \emph{cut point}, of $x$ along $\gamma_\xi$. The sets $\widetilde{C}_x := \{t_{\mathrm{c}}(\xi)\xi \mid \xi \in \mathrm{U}_x\mathcal{M}, \ t_{\mathrm{c}}(\xi)<+\infty\}$ and $C_x := \{\gamma_\xi(t_{\mathrm{c}}(\xi)) \mid \xi \in \mathrm{U}_x\mathcal{M}, \ t_{\mathrm{c}}(\xi)<+\infty\}$ are termed the \emph{tangent cut locus} and \emph{cut locus} of $x$, respectively. The cut time admits the following characterization, where a point $\Exp_x(t\xi)$ is said to be \emph{conjugate} to $x$ along the geodesic $t\mapsto\Exp_x(t\xi)$ if $t\xi$ is a critical point of $\Exp_x$, i.e., $\mathrm{D}\Exp_x(t\xi)$ is singular. 

\begin{theorem}[Lemma~5.2 in~\cite{CE75}, Proposition~4.1 in~\cite{Sak96}]  \label{thm:ab}
Let $\xi \in \mathrm{U}_x\mathcal{M}$ with $t_{\mathrm{c}}(\xi)<\infty$. Then $T = t_{\mathrm{c}}(\xi)$ 
if and only if the following holds for $t_0 = T$ and for no smaller value of $t_0$:
\\ \hspace*{2\parindent} (a) $\gamma_\xi(t_0)$ is a conjugate point of $x$ along $\gamma_\xi$;
\\ \hspace*{2\parindent} \textbf{or} (nonexclusive)
\\ \hspace*{2\parindent} (b) there exists $\zeta\in \mathrm{U}_x\mathcal{M}$, $\zeta \neq \xi$, such that $\gamma_\xi(t_0) = \gamma_\zeta(t_0)$.
\end{theorem}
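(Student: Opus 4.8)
The plan is to establish the stated equivalence by showing that $t_{\mathrm{c}}(\xi)$ is exactly the smallest $t_0$ at which (a) or (b) occurs; since that smallest value is unique, this yields the ``if and only if''. I would split the argument into two inequalities. Throughout I would rely on four standard ingredients: continuity of $\Exp_x$ and of the distance $d$, compactness of the unit tangent sphere $\mathrm{U}_x\mathcal{M}$, the inverse function theorem, and the two classical variational facts that a piecewise-geodesic with a genuine corner can be strictly shortened (first variation), and that a geodesic ceases to be minimal strictly beyond its first conjugate point (second variation / Jacobi fields).

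First I would show that for every $t_0 < t_{\mathrm{c}}(\xi)$ \emph{neither} (a) nor (b) can hold, so that the smallest such $t_0$ is at least $t_{\mathrm{c}}(\xi)$. For (a): by definition of the cut time, $\gamma_\xi|_{[0,t_0']}$ is still minimal for some $t_0 < t_0' \le t_{\mathrm{c}}(\xi)$; since a geodesic does not minimize past its first conjugate point, $\gamma_\xi$ can have no conjugate point on $(0,t_0]$. For (b): if a second unit-speed minimal geodesic $\gamma_\zeta$, with $\zeta\ne\xi$, also reached $q=\gamma_\xi(t_0)$, then $\gamma_\zeta'(t_0)\ne\gamma_\xi'(t_0)$ (otherwise the two geodesics would coincide by uniqueness of geodesics), so concatenating $\gamma_\zeta|_{[0,t_0]}$ with $\gamma_\xi|_{[t_0,t_0']}$ would produce a corner in a curve of minimal length $t_0'$, which can be shortened, contradicting minimality of $\gamma_\xi|_{[0,t_0']}$.

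Next I would show that at $t_0 = t_{\mathrm{c}}(\xi)$, (a) or (b) must hold, giving the reverse inequality. Writing $T = t_{\mathrm{c}}(\xi)$, I would pick $t_n \downarrow T$ with $t_n > T$, so each $\gamma_\xi|_{[0,t_n]}$ fails to be minimal; let $\gamma_{\zeta_n}$ be a minimal geodesic from $x$ to $\gamma_\xi(t_n)$ with $\zeta_n \in \mathrm{U}_x\mathcal{M}$ and length $\ell_n = d(x,\gamma_\xi(t_n)) < t_n$. Compactness of $\mathrm{U}_x\mathcal{M}$ furnishes a subsequence $\zeta_n \to \zeta$; continuity of $d$ gives $\ell_n \to T$, and continuity of $\Exp_x$ gives $\Exp_x(T\zeta) = \gamma_\xi(T)$ with $\gamma_\zeta|_{[0,T]}$ minimal. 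If $\zeta \neq \xi$ we are in case (b). If $\zeta = \xi$, I would argue (a) by contradiction: were $\mathrm{D}\Exp_x(T\xi)$ nonsingular, $\Exp_x$ would be a local diffeomorphism near $T\xi$; but $\ell_n\zeta_n$ and $t_n\xi$ are two \emph{distinct} tangent vectors (distinct because $\lvert\ell_n\zeta_n\rvert=\ell_n<t_n=\lvert t_n\xi\rvert$), both converging to $T\xi$ and sharing the image $\gamma_\xi(t_n)$, which violates local injectivity; hence $\mathrm{D}\Exp_x(T\xi)$ is singular and $\gamma_\xi(T)$ is conjugate to $x$.

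The main obstacle I anticipate is the conjugate-point subcase $\zeta=\xi$ of the second step: it requires the inverse function theorem together with the careful observation that the two preimages $\ell_n\zeta_n$ and $t_n\xi$ never coincide, which is guaranteed by the strict inequality $\ell_n<t_n$. A secondary delicate point is the input that a geodesic does not remain minimal beyond its first conjugate point, used in the first step for (a); this is itself a nontrivial consequence of the index form / Jacobi field theory, which I would invoke as a known result rather than reprove.
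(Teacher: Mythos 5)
The paper states this theorem without proof, citing it directly from Cheeger--Ebin (Lemma~5.2) and Sakai (Proposition~4.1), so there is no in-paper argument to compare against; your proposal is correct and reproduces the classical proof from those references. In particular, the forward direction via a sequence $t_n\downarrow T$ of non-minimal segments, compactness of $\mathrm{U}_x\mathcal{M}$, and the inverse function theorem applied to the distinct preimages $\ell_n\zeta_n\neq t_n\xi$ in the subcase $\zeta=\xi$, together with the exclusion of smaller $t_0$ by the first-variation corner-shortening argument and Jacobi's theorem, is exactly the standard route.
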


The cut locus $C_x$ is closed~\cite[Proposition~5.4]{CE75}. The \emph{injectivity radius} of $x$ is 
\[
  \mathrm{inj}_{\mathcal{M}}(x) := d(x, C_x) = \inf\{t_{\mathrm{c}}(\xi) \mid \xi \in \mathrm{U}_x\mathcal{M}\}.  
\]
Equivalently, $\mathrm{inj}_{\mathcal{M}}(x)$ is the largest $r$ such that $\Exp_x$ is a diffeomorphism of the open ball of radius $r$ in $\mathrm{T}_x\mathcal{M}$ onto its image~\cite[Definition~5.5]{CE75}. 

The \emph{conjugate radius of $x$}, $\mathrm{conj}_{\mathcal{M}}(x)$, is the supremum of the radius of open balls centered at the origin of the tangent space $\mathrm{T}_x\mathcal{M}$ which contain no critical point of the exponential map $\Exp_x: \mathrm{T}_x\mathcal{M} \to \mathcal{M}$. 

It is known (see, e.g.,~\cite[eq.~(1.9)]{Xu2018} for the explicit statement, which follows from the work of Klingenberg~\cite{Kli82}) that
\begin{equation}  \label{eq:inj-ell-conj}
\mathrm{inj}_{\mathcal{M}}(x) = \min \{\frac12 \ell_{\mathcal{M}}(x), \mathrm{conj}_{\mathcal{M}}(x) \},
\end{equation}
where $\ell_{\mathcal{M}}(x)$ is the length of a shortest nontrivial geodesic loop at $x$. 
Finally, the \emph{injectivity radius of $\mathcal{M}$} is
\[
    \mathrm{inj}_{\mathcal{M}} := \inf \{\mathrm{inj}_{\mathcal{M}}(x) \mid x \in \mathcal{M} \}.
\]

\section{Background on the Stiefel manifold}
\label{sec:Stiefel}

In this section, we review the fundamentals of the Stiefel manifold and its family of Riemannian metrics introduced in~\cite{HuperMarkinaLeite2021}, with an emphasis on the Riemannian submersion structure that induces the metrics. We chiefly follow the notation of~\cite{ZimmermannHueper2022}. Background on manifolds and Lie groups can be found, e.g., in~\cite{Lee2003}. 

For $1\leq p \leq n$ with $n\geq2$,\footnote{This excludes the uninteresting case $p=n=1$ where $\mathrm{St}(n,p)=\{-1,1\}$.} the Stiefel manifold of orthonormal $p$-frames in $\mathbb{R}^n$ is
\begin{equation}  \label{eq:Stiefel}
\mathrm{St}(n,p) := \{X \in \mathbb{R}^{n\times p} \mid X^\top X = I_p \},    
\end{equation}
where $I_p$ is the identity matrix of size $p$. It is a submanifold of $\mathbb{R}^{n\times p}$; see, e.g.,~\cite[\S 3.3.2]{AMS2008}. 
Its tangent space at $X\in\mathrm{St}(n,p)$ is
\[
    \mathrm{T}_X \mathrm{St}(n,p) = \{\xi \in \mathbb{R}^{n\times p} \mid X^\top \xi + \xi^\top X = 0\}.
\]

The orthogonal group is denoted by 
\[
\mathrm{O}(n) := \{Q\in\mathbb{R}^{n\times n} \mid Q^\top Q = I_n\},     
\]
its connected component containing $I_n$ is the special orthogonal group denoted by $\mathrm{SO}(n)$,
and the set of skew-symmetric matrices of size $p$ is denoted by
\[
\mathcal{S}_{\mathrm{skew}}(p) := \{A\in\mathbb{R}^{p\times p} \mid A^\top = -A\}.    
\]
Let $X_\perp$ be such that $\begin{bmatrix} X & X_\perp \end{bmatrix}$ belongs to $\mathrm{O}(n)$. Then 
\begin{equation}  \label{eq:TStAH}
    \mathrm{T}_X \mathrm{St}(n,p) = \{X A + X_\perp H \mid A\in\mathcal{S}_{\mathrm{skew}}(p), H\in\mathbb{R}^{(n-p)\times p} \}.
\end{equation}

In the rest of this section, we restrict to $p\leq n-1$. However, in later sections, we sometimes refer to the Riemannian metrics~\eqref{eq:g} and~\eqref{eq:can-g} for $p=n$; this amounts to ignoring their $H$-term.

\subsection{A Riemannian-submersion route to the noncanonical metrics}
\label{sec:noncanonical}

Let $\beta>0$ and let us embark on a development that induces on the Stiefel manifold a Riemannian metric (\emph{metric} for short),~\eqref{eq:g}, parameterized by $\beta$. In this section~\ref{sec:noncanonical}, we exclude $\beta=\frac12$ to avoid a division by zero in~\eqref{eq:overline-g}; the case $\beta=\frac12$ (the canonical metric) is addressed in section~\ref{sec:canonical}.

Consider the Lie group action
\[
\tau : (\mathrm{O}(n) \times \mathrm{O}(p)) \times \mathrm{St}(n,p) \to \mathrm{St}(n,p):
\left( (Q,V), X \right) \mapsto QXV^\top =: \tau_X(Q,V).    
\]
Let $I_{n\times p} = \begin{bmatrix} I_p \\ 0_{(n-p)\times p} \end{bmatrix}$. 
The stabilizer of $I_{n\times p}$ is
\[
\tau_{I_{n\times p}}^{-1}(I_{n\times p}) = \left\{ \left( \begin{bmatrix} V & 0 \\ 0 & W \end{bmatrix}, V \right) \mid V \in \mathrm{O}(p), W \in \mathrm{O}(n-p) \right\} \simeq \mathrm{O}(n-p) \times \mathrm{O}(p).    
\]
Hence, as an application of~\cite[Proposition~A.2]{BendokatZimmermannAbsil2024}, the submanifold $\mathrm{St}(n,p)$ is diffeomorphic to the quotient manifold $(\mathrm{O}(n) \times \mathrm{O}(p)) / (\mathrm{O}(n-p) \times \mathrm{O}(p))$:
\[
    \mathrm{St}(n,p) 
    \simeq
    (\mathrm{O}(n) \times \mathrm{O}(p)) / (\mathrm{O}(n-p) \times \mathrm{O}(p)).
\]

As in~\cite{HuperMarkinaLeite2021}, consider on $\mathrm{O}(n) \times \mathrm{O}(p)$ the (pseudo-)Riemannian metric $\overline{g}$ defined by 
\begin{align}  
\overline{g}_{(Q,V)}\left( (Q\Omega_{\mathrm{a}},V\Psi_{\mathrm{a}}), (Q\Omega_{\mathrm{b}},V\Psi_{\mathrm{b}}) \right) & := \frac12 \mathrm{trace}(\Omega_{\mathrm{a}}^\top \Omega_{\mathrm{b}}) + \frac{1}{2} \frac{2\beta}{1-2\beta} \mathrm{trace} (\Psi_{\mathrm{a}}^\top \Psi_{\mathrm{b}})  \label{eq:overline-g}
\\ &= \frac12 \mathrm{trace}(\Omega_{\mathrm{a}}^\top \Omega_{\mathrm{b}}) + \frac{1}{2\alpha} \mathrm{trace} (\Psi_{\mathrm{a}}^\top \Psi_{\mathrm{b}}), \nonumber
\end{align}
where we have defined
\begin{equation}  \label{eq:alpha-beta}
    \alpha := \frac1{2\beta}-1,  
\quad \text{i.e.,} \quad \beta = \frac{1/2}{\alpha+1},
\end{equation}
to recover the parameterization used in~\cite{HuperMarkinaLeite2021}. An advantage of the $\beta$ parameterization, also used in~\cite{nguyen2021curvatures}, is that it simplifies several expressions, notably the Stiefel metric~\eqref{eq:g}. Observe that, in this section~\ref{sec:noncanonical}, the range of $\alpha$ is $(-1,\infty)\setminus\{0\}$. For $\alpha > 0$ (i.e., $\beta < \frac12$),~\eqref{eq:overline-g} defines a bona-fide Riemannian metric; for $\alpha < 0$ (i.e., $\beta > \frac12$), it is indefinite, making $\overline{g}$ a pseudo-Riemannian metric.
The factor of $\frac12$ in~\eqref{eq:overline-g} has been introduced so that, notably, the length of the induced geodesic loops on $\mathrm{St}(n,1)$, which is the unit sphere, have length $2\pi$ instead of the odd-looking $2\sqrt{2}\pi$. 

The Lie algebra of the stabilizer $\tau_{I_{n\times p}}^{-1}(I_{n\times p})$ is
\[
\mathfrak{k} = \left\{ \left( \begin{bmatrix} \Psi & 0 \\ 0 & C \end{bmatrix}, \Psi \right) \mid \Psi \in \mathcal{S}_{\mathrm{skew}}(p), C \in \mathcal{S}_{\mathrm{skew}}(n-p) \right\}.    
\]
Let $\mathfrak{o}(n) \times \mathfrak{o}(p) = \mathcal{S}_{\mathrm{skew}}(n) \times \mathcal{S}_{\mathrm{skew}}(p)$ denote the Lie algebra of $\mathrm{O}(n) \times \mathrm{O}(p)$. As a preparation for the forthcoming notion of horizontal space, which plays a central role in the (pseudo-)Riemannian submersion structure that we aim to emphasize, we let $\mathfrak{m} \subset \mathfrak{o}(n) \times \mathfrak{o}(p)$ be the orthogonal companion of $\mathfrak{k}$ with respect to $\overline{g}$, i.e., 
\[
\mathfrak{m} = \{(\Omega,\Psi) \in \mathfrak{o}(n) \times \mathfrak{o}(p) \mid \overline{g}_{(I_n,I_p)}\left((\Omega_{\mathrm{a}},\Psi_{\mathrm{a}}), (\Omega,\Psi)\right) = 0 \text{ for all } (\Omega_{\mathrm{a}},\Psi_{\mathrm{a}}) \in \mathfrak{k}\}.
\] 
Since the restriction of $\overline{g}$ to $\mathfrak{k}$ remains nondegenerate~\cite[Proposition~2]{HuperMarkinaLeite2021}, it follows that $\mathfrak{m}$ is a complement to $\mathfrak{k}$ in $\mathfrak{o}(n) \times \mathfrak{o}(p)$~\cite[Proposition~1]{HuperMarkinaLeite2021}. Let us obtain an explicit expression for $\mathfrak{m}$. For $(\Omega,\Psi) \in \mathfrak{o}(n) \times \mathfrak{o}(p)$, it holds that $(\Omega,\Psi) \in \mathfrak{m}$ if and only if $\frac12 \mathrm{trace}\left( \left[\begin{smallmatrix} \Psi_{\mathrm{a}} & 0 \\ 0 & C_{\mathrm{a}} \end{smallmatrix}\right]^\top \Omega \right) + \frac1{2\alpha} \mathrm{trace} \left( \Psi_{\mathrm{a}}^\top \Psi \right) = 0$ for all $\Psi_{\mathrm{a}} \in \mathcal{S}_{\mathrm{skew}}(p)$ and $C_{\mathrm{a}} \in \mathcal{S}_{\mathrm{skew}}(n-p)$. Decomposing $\Omega$ in blocks as $\left[\begin{smallmatrix} \Omega_{11} & -\Omega_{21}^\top \\ \Omega_{21} & \Omega_{22} \end{smallmatrix}\right]$, the condition reads 
$\mathrm{trace}\left( \Psi_{\mathrm{a}}^\top (\Omega_{11} + \frac1\alpha \Psi) \right) + \mathrm{trace}\left( C_{\mathrm{a}}^\top \Omega_{22} \right) = 0$ for all $\Psi_{\mathrm{a}} \in \mathcal{S}_{\mathrm{skew}}(p)$ and $C_{\mathrm{a}} \in \mathcal{S}_{\mathrm{skew}}(n-p)$. Since $\Omega_{11} + \frac1\alpha \Psi \in \mathcal{S}_{\mathrm{skew}}(p)$ and $\Omega_{22} \in \mathcal{S}_{\mathrm{skew}}(n-p)$, the condition is equivalent to $\Omega_{11} = -\frac1\alpha \Psi$ and $\Omega_{22} = 0$. We have thus obtained the expression
\begin{align}
    \mathfrak{m} &= \left\{ \left( \begin{bmatrix} -\tfrac1\alpha \tilde{\Psi} & -\tilde{B}^\top \\ \tilde{B} & 0 \end{bmatrix}, \tilde{\Psi} \right) \mid \tilde{\Psi} \in \mathcal{S}_{\mathrm{skew}}(p), \tilde{B} \in \mathbb{R}^{(n-p)\times p} \right\}  \label{eq:hor-alpha}
    \\ &= \left\{ \left( \begin{bmatrix} 2\beta \Psi & -B^\top \\ B & 0 \end{bmatrix}, -(1-2\beta) \Psi \right) \mid \Psi \in \mathcal{S}_{\mathrm{skew}}(p), B \in \mathbb{R}^{(n-p)\times p} \right\}.  \nonumber
\end{align}

Consider the map $\varphi = \tau_{I_{n\times p}}$, i.e.,
\[
    \varphi : (\mathrm{O}(n) \times \mathrm{O}(p)) \to \mathrm{St}(n,p):
    (Q,V) \mapsto QI_{n\times p}V^\top.    
\]
In the vocabulary of principal fiber bundles, the domain of $\varphi$ is termed the \emph{total space} and its co-domain the \emph{base space}. The \emph{fiber} above $\varphi(Q,V)$ is 
\begin{multline}  \label{eq:fiber}
\varphi^{-1}(\varphi(Q,V)) = (Q,V) \varphi^{-1}(I_{n\times p}) 
\\ = \left\{ \left( Q \begin{bmatrix} R_1 & 0 \\ 0 & R_2 \end{bmatrix}, V R_1 \right) \mid R_1 \in \mathrm{O}(p), R_2 \in \mathrm{O}(n-p) \right\}.     
\end{multline}
At $(Q,V)$, the tangent space to the fiber (the \emph{vertical space}) is $(Q,V)\mathfrak{k}$ and its orthogonal complement (the \emph{horizontal space}) is $(Q,V)\mathfrak{m}$. 

Let $\mathrm{D}\varphi(a;b)$ denote the derivative of $\varphi$ at $a$ along $b$. When $b$ is a horizontal direction, we obtain
\begin{multline*}
    \mathrm{D}\varphi\left(
    (Q,V); \left( Q \begin{bmatrix} 2\beta \Psi & -B^\top \\ B & 0 \end{bmatrix}, -V (1-2\beta) \Psi \right)
    \right)
\\    = Q \begin{bmatrix} 2\beta \Psi & -B^\top \\ B & 0 \end{bmatrix} I_{n\times p} V^\top + Q I_{n\times p} (1-2\beta) \Psi V^\top
    = Q \begin{bmatrix} \Psi \\ B \end{bmatrix} V^\top.
\end{multline*}
A classical result in (pseudo-)Riemannian submersion theory (see, e.g.,~\cite[\S 7.44]{ONe1983}) is that, for all $\xi\in \mathrm{T}_X \mathrm{St}(n,p)$ and all $(Q,V)$ in the fiber $\varphi^{-1}(X)$, there is a unique $\overline{\xi}_{(Q,V)}$ in the horizontal space $(Q,V)\mathfrak{m}$ such that $\mathrm{D}\varphi\left((Q,V); \overline{\xi}_{(Q,V)}\right) = \xi$. This $\overline{\xi}_{(Q,V)}$ is termed the \emph{horizontal lift} of $\xi$ at $(Q,V)$. Expressing $\xi$ as 
\[
\xi = Q \begin{bmatrix} V^T A \\ H \end{bmatrix} 
= Q \begin{bmatrix} V^T A V\\ H V \end{bmatrix} V^\top, 
\]
one readily obtains 
\begin{equation}  \label{eq:overline-xi}
\overline{\xi}_{(Q,V)} =  \left( Q \begin{bmatrix} 2\beta V^\top A V & -V^\top H^\top \\ H V & 0 \end{bmatrix}, -(1-2\beta) V V^\top A V \right).   
\end{equation}

The following invariance holds: given $\xi$ and $\zeta$ in $\mathrm{T}_X \mathrm{St}(n,p)$, the inner product $\overline{g}_{(Q,V)}(\overline{\xi}_{(Q,V)}, \overline{\zeta}_{(Q,V)})$ does not depend on the choice of $(Q,V)$ in the fiber $\varphi^{-1}(X)$, i.e.
\begin{equation}  \label{eq:lift-metric-invariance}
    \overline{g}_{(Q,V)}(\overline{\xi}_{(Q,V)}, \overline{\zeta}_{(Q,V)}) = \overline{g}_{(\tilde{Q},\tilde{V})}(\overline{\xi}_{(\tilde{Q},\tilde{V})}, \overline{\zeta}_{(\tilde{Q},\tilde{V})}) \quad \text{for all $(Q,V), (\tilde{Q},\tilde{V}) \in \varphi^{-1}(X)$}.
\end{equation} 
Hence $\overline{g}$ induces a well-defined (pseudo-)Riemannian metric $g$ on $\mathrm{St}(n,p)$. Taking into account that each fiber contains a point where the second component, $V$, is the identity, we obtain the following expression for $g$: for all $\xi_{\mathrm{a}} = Q \left[\begin{smallmatrix}A_{\mathrm{a}} \\ H_{\mathrm{a}}\end{smallmatrix}\right]$ and $\xi_{\mathrm{b}} = Q \left[\begin{smallmatrix}A_{\mathrm{b}} \\ H_{\mathrm{b}}\end{smallmatrix}\right]$, we have
\begin{equation}  \label{eq:g}
g_{Q I_{n\times p}}\left( \xi_{\mathrm{a}}, \xi_{\mathrm{b}} \right) 
:= \overline{g}_{(Q,I_p)}\left( \overline{\xi_{\mathrm{a}}}_{(Q,I_p)}, \overline{\xi_{\mathrm{b}}}_{(Q,I_p)} \right)
= \beta \, \mathrm{trace}(A_{\mathrm{a}}^\top A_{\mathrm{b}}) + \mathrm{trace}(H_{\mathrm{a}}^\top H_{\mathrm{b}}),     
\end{equation}
which turns out to be a bona-fide Riemannian metric (since we have assumed that $\beta > 0$) even though, as mentioned above, $\overline{g}$ is indefinite for $\beta > \frac12$.
The map $\varphi$ from $\mathrm{O}(n) \times \mathrm{O}(p)$ to $\mathrm{St}(n,p)$, respectively endowed with the (pseudo-)Riemannian metrics $\overline{g}$ and $g$, is termed a \emph{(pseudo-)Riemannian submersion}. Note that the terms pseudo-Riemannian and semi-Riemannian are used interchangeably in the literature, e.g.,~\cite{HuperMarkinaLeite2021,ONe1983}. In the rest of the paper, the notation $\mathrm{St}_\beta(n,p)$ is sometimes used to recall that the Riemannian metric~\eqref{eq:g} depends on $\beta$. 
Observe that~\eqref{eq:g} is the metric considered in~\cite[eq.~(5)]{ZimmermannHueper2022}. 

The invariance~\eqref{eq:lift-metric-invariance} is not coincidental. The metric $\overline{g}$ is bi-invariant, hence in particular the action of $\mathrm{O}(n-p) \times \mathrm{O}(p)$ on $\mathrm{O}(n) \times \mathrm{O}(p)$ by right multiplication is isometric, and the invariance result follows from~\cite[Theorem~23.14]{GallierQuaintance2020} using~\cite[Corollary~23.10]{GallierQuaintance2020}.

The geodesic in $\mathrm{O}(n) \times \mathrm{O}(p)$ (endowed with the (pseudo-)Riemannian metric $\overline{g}$) starting at $(Q,V)$ with initial velocity $(Q\Omega,V\Psi)$ is $t \mapsto \Exp_{(Q,V)}(t(Q\Omega,V\Psi))$ with
\[
\Exp_{(Q,V)}(Q\Omega,V\Psi) = (Q\exp_{\mathrm{m}}(\Omega), V\exp_{\mathrm{m}}(\Psi)),
\]  
where $\Exp$ in the left-hand side is the Riemannian exponential in the total space $\mathrm{O}(n) \times \mathrm{O}(p)$ and $\exp_{\mathrm{m}}$ denotes the matrix exponential; this follows from the bi-invariance of $\overline{g}$ (see~\cite[Proposition~11.9(6)]{ONe1983}).
Observe that $\beta$ does not affect the geodesics in $\mathrm{O}(n) \times \mathrm{O}(p)$, but it affects the notion of horizontality in view of~\eqref{eq:hor-alpha}. 

The (pseudo-)Riemannian submersion route to the metric~\eqref{eq:g} offers an appreciable benefit: the theory establishes that if a geodesic in the domain $\mathrm{O}(n) \times \mathrm{O}(p)$ of the (pseudo-)Riemannian submersion $\varphi$ is horizontal at some point, then it is always horizontal, and its image by $\varphi$ is a geodesic in the codomain $\mathrm{St}_\beta(n,p)$ 
(see~\cite{ONeill1967}, and~\cite[Theorem~4.4]{CaponioJavaloyesPiccione2010} for the generalization to pseudo-Riemannian submersions).
Hence, given $X\in\mathrm{St}(n,p)$ and $\xi\in\mathrm{T}_X\mathrm{St}(n,p)$, regardless of the choice of $(Q,V)$ in the fiber $\varphi^{-1}(X)$, the exponential on $\mathrm{St}(n,p)$ endowed with the metric $g$~\eqref{eq:g} is given by
\begin{equation}  \label{eq:Exp}
\Exp_X \xi = \varphi\left( \Exp_{(Q,V)}(\overline{\xi}_{(Q,V)}) \right)
= Q\exp_{\mathrm{m}}(\Omega) I_{n\times p} \exp_{\mathrm{m}}(-\Psi) V^\top,   
\end{equation}
where $\overline{\xi}_{(Q,V)} = (Q\Omega,V\Psi)$. This yields the expression
\begin{equation}  \label{eq:geod-Stiefel}
\Exp_{Q I_{n\times p}}\left( Q \begin{bmatrix}A\\H\end{bmatrix} \right) 
= Q \exp_{\mathrm{m}} \begin{bmatrix}2\beta A & -H^\top \\ H & 0\end{bmatrix} I_{n\times p} \exp_{\mathrm{m}}((1-2\beta)A),     
\end{equation}
which is seen to be equivalent to~\cite[eq.~(10)]{ZimmermannHueper2022}. 
As an aside, it is also possible to obtain~\eqref{eq:geod-Stiefel} by applying~\cite[Proposition~23.27]{GallierQuaintance2020} on $(\mathrm{O}(n) \times \mathrm{O}(p)) / (\mathrm{O}(n-p) \times \mathrm{O}(p))$, which is a naturally reductive homogeneous space in view of~\cite[Proposition~23.29]{GallierQuaintance2020}.

Observe from~\eqref{eq:geod-Stiefel} that the geodesics at $Q I_{n\times p}$ are the geodesics at $I_{n\times p}$ multiplied on the left by $Q$. It follows that the injectivity radius is the same at every point of $\mathrm{St}(n,p)$. In the rest of the paper, we single out the point $I_{n\times p}$.

As for the derivative of the Riemannian exponential, which is involved in the definition of conjugate points,~\eqref{eq:Exp} yields the relation
\[
    \mathrm{D}\Exp_X(\xi;\breve{\xi})
    = \mathrm{D}\varphi\left(\Exp_{(Q,V)}(\overline{\xi}_{(Q,V)}); \mathrm{D} \Exp_{(Q,V)}(\overline{\xi}_{(Q,V)}; \overline{\breve{\xi}}_{(Q,V)}) \right)
\]
for all $\xi\in \mathrm{T}_X\mathrm{St}(n,p)$   
and $\breve{\xi} \in \mathrm{T}_{\xi} \mathrm{T}_X \mathrm{St}(n,p) \simeq \mathrm{T}_X \mathrm{St}(n,p)$, where $\varphi(Q,V) = X$.
We will only need the following consequence, which we state in a way that only invokes basic concepts of matrix theory, analysis and differential geometry: 

\begin{proposition}  \label{prp:conj-Stiefel-gen}
Let the Stiefel manifold $\mathrm{St}(n,p)$ be endowed with the Riemannian metric~\eqref{eq:g} with $\beta\neq\frac12$. Let $X\in\mathrm{St}(n,p)$, $Q\in\mathrm{O}(n)$ such that  
$Q I_{n\times p} = X$, 
$\xi\in \mathrm{T}_X\mathrm{St}(n,p)$   
and $\breve{\xi} \in \mathrm{T}_{\xi} \mathrm{T}_X \mathrm{St}(n,p) \simeq \mathrm{T}_X \mathrm{St}(n,p)$. 
Represent $\xi$ and $\breve{\xi}$ as in~\eqref{eq:TStAH}, namely 
\[
\xi = Q \begin{bmatrix} A \\ H \end{bmatrix}, \quad \breve{\xi} = Q \begin{bmatrix} \breve{A} \\ \breve{H}  \end{bmatrix},    
\]
where $A,\breve{A}\in\mathcal{S}_{\mathrm{skew}}(p)$ and $H,\breve{H}\in\mathbb{R}^{(n-p)\times p}$.
Following~\eqref{eq:overline-xi}, let
\[
\Omega := \begin{bmatrix}2\beta A & -H^\top \\ H & 0\end{bmatrix}, \
\Psi := -(1-2\beta)A, \
\breve{\Omega} := \begin{bmatrix}2\beta \breve{A} & -\breve{H}^\top \\ \breve{H} & 0\end{bmatrix},
\breve{\Psi} := -(1-2\beta)\breve{A}.
\]    
Then 
\begin{equation}  \label{eq:noncan-der}
\mathrm{D}\Exp_X(t\xi;\breve{\xi}) = 
            \left.\frac{\mathrm{d}}{\mathrm{d}\epsilon}\right|_{\epsilon=0} Q \exp_{\mathrm{m}}(t\Omega + \epsilon \breve\Omega) 
            I_{n\times p} \exp_{\mathrm{m}}(-t\Psi - \epsilon \breve\Psi).   
\end{equation}
\end{proposition}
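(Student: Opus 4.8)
The plan is to bypass the submersion-level chain rule displayed just before the statement and instead differentiate the closed-form geodesic expression~\eqref{eq:geod-Stiefel} directly, exploiting that the coefficient maps $(A,H)\mapsto\Omega$ and $(A,H)\mapsto\Psi$ are linear. First I would use the fact that $\mathrm{T}_X\mathrm{St}(n,p)$ is a (flat) vector space, so that its tangent space at any point is canonically identified with $\mathrm{T}_X\mathrm{St}(n,p)$ itself; under this identification the second-order vector $\breve{\xi}$ is an ordinary direction in $\mathrm{T}_X\mathrm{St}(n,p)$ and the directional derivative of $\Exp_X$ along it reads
\[
\mathrm{D}\Exp_X(t\xi;\breve{\xi}) = \left.\frac{\mathrm{d}}{\mathrm{d}\epsilon}\right|_{\epsilon=0}\Exp_X(t\xi + \epsilon\breve{\xi}).
\]

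Next I would write the perturbed tangent vector in the block form of~\eqref{eq:TStAH}, namely $t\xi + \epsilon\breve{\xi} = Q\left[\begin{smallmatrix} tA + \epsilon\breve{A} \\ tH + \epsilon\breve{H}\end{smallmatrix}\right]$, and substitute it into the geodesic formula~\eqref{eq:geod-Stiefel}, which is legitimate because that formula holds for an arbitrary (not merely unit) tangent vector at $X$. The argument of the first matrix exponential then becomes $\left[\begin{smallmatrix} 2\beta(tA+\epsilon\breve{A}) & -(tH+\epsilon\breve{H})^\top \\ tH+\epsilon\breve{H} & 0\end{smallmatrix}\right]$, which by linearity equals $t\Omega + \epsilon\breve{\Omega}$, while the argument of the second exponential is $(1-2\beta)(tA+\epsilon\breve{A})$, which equals $-t\Psi - \epsilon\breve{\Psi}$ by the definitions of $\Psi$ and $\breve{\Psi}$. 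Hence
\[
\Exp_X(t\xi+\epsilon\breve{\xi}) = Q\exp_{\mathrm{m}}(t\Omega+\epsilon\breve{\Omega})\, I_{n\times p}\,\exp_{\mathrm{m}}(-t\Psi-\epsilon\breve{\Psi}),
\]
and differentiating at $\epsilon=0$ yields~\eqref{eq:noncan-der}.

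Essentially no hard analytic step remains: the whole argument is just the combination of the explicit exponential~\eqref{eq:geod-Stiefel} with the linearity of the coefficient maps. The only points that require a little care are (i) justifying the passage from the abstract directional derivative $\mathrm{D}\Exp_X(t\xi;\breve{\xi})$ to the one-parameter $\epsilon$-derivative, which rests on the canonical trivialization of the tangent bundle of the vector space $\mathrm{T}_X\mathrm{St}(n,p)$, and (ii) noting that~\eqref{eq:geod-Stiefel} holds verbatim for the non-unit, $\epsilon$-dependent vectors $t\xi+\epsilon\breve{\xi}$. I would also remark that this direct route is consistent with the submersion-level chain rule stated above the proposition: differentiating the factored form~\eqref{eq:Exp} of $\Exp_X$ and using the linearity of the horizontal lift~\eqref{eq:overline-xi} reproduces the same formula, but the direct computation is shorter and avoids invoking $\mathrm{D}\varphi$ and $\mathrm{D}\Exp_{(Q,V)}$ explicitly.
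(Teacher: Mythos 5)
Your proposal is correct and matches the paper's (implicit) argument: the paper presents the proposition as an immediate consequence of differentiating the closed-form exponential~\eqref{eq:Exp}/\eqref{eq:geod-Stiefel}, using the linearity of the horizontal lift $\xi \mapsto (\Omega,\Psi)$, which is exactly the computation you carry out. Your two points of care --- the canonical identification $\mathrm{T}_{\xi}\mathrm{T}_X\mathrm{St}(n,p)\simeq\mathrm{T}_X\mathrm{St}(n,p)$ and the validity of~\eqref{eq:geod-Stiefel} for arbitrary (non-unit) tangent vectors --- are the right ones, and the sign bookkeeping $(1-2\beta)(tA+\epsilon\breve{A})=-t\Psi-\epsilon\breve{\Psi}$ is handled correctly.
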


\subsection{The canonical case}
\label{sec:canonical}

The canonical metric on the Stiefel manifold~\cite{EAS98} is $g$~\eqref{eq:g} with $\beta = \frac12$. In view of~\eqref{eq:alpha-beta}, $\beta = \frac12$ corresponds to $\alpha = 0$, for which $\overline{g}$~\eqref{eq:overline-g} is no longer a bona-fide metric. The necessary facts for the canonical metric can be obtained by taking the limit as $\alpha\to0$. However, to err on the side of caution, we review a Riemannian submersion structure that induces the canonical metric. 

We opt for a fairly detailed overview of this Riemannian submersion structure (which is not mentioned in~\cite{AMS2008} and only briefly considered in~\cite[Example~9.36]{boumal2023intromanifolds}) because it was instrumental in our discovery of conjugate points. With the exception of Proposition~\ref{prp:can-conj-Stiefel-gen}, the ingredients can already be found in~\cite[\S 2.3.1]{EAS98}, \cite{HuperMarkinaLeite2021}, and~\cite[\S 23.7]{GallierQuaintance2020}.

Consider the Lie group action
\[
\sigma : \mathrm{O}(n) \times \mathrm{St}(n,p) \to \mathrm{St}(n,p):
\left( Q, X \right) \mapsto QX =: \sigma_X(Q).    
\]
The stabilizer of $I_{n\times p}$ is
\[
\sigma_{I_{n\times p}}^{-1}(I_{n\times p}) = \left\{ \begin{bmatrix} I & 0 \\ 0 & W \end{bmatrix} \mid W \in \mathrm{O}(n-p) \right\} \simeq \mathrm{O}(n-p).    
\]
Hence, as an application of~\cite[Proposition~A.2]{BendokatZimmermannAbsil2024}, the submanifold $\mathrm{St}(n,p)$ is diffeomorphic to the quotient manifold $\mathrm{O}(n) / \mathrm{O}(n-p)$:
\[
    \mathrm{St}(n,p) 
    \simeq
    \mathrm{O}(n) / \mathrm{O}(n-p).
\]

As in~\cite{HuperMarkinaLeite2021}, consider on $\mathrm{O}(n)$ the Riemannian metric $\overline{g}$ defined by 
\begin{equation}  \label{eq:can-overline-g}
\overline{g}_{Q}\left( Q\Omega_{\mathrm{a}}, Q\Omega_{\mathrm{b}} \right) := \frac12 \mathrm{trace}(\Omega_{\mathrm{a}}^\top \Omega_{\mathrm{b}}). 
\end{equation}
The Lie algebra of the stabilizer $\sigma_{I_{n\times p}}^{-1}(I_{n\times p})$ is
\[
\mathfrak{k} = \left\{ \begin{bmatrix} 0 & 0 \\ 0 & C \end{bmatrix} \mid C \in \mathcal{S}_{\mathrm{skew}}(n-p) \right\}.    
\]
Its complement with respect to $\overline{g}$ is 
\[
    \mathfrak{m} = \left\{ \begin{bmatrix} \Psi & -B^\top \\ B & 0 \end{bmatrix} \mid  \Psi \in \mathcal{S}_{\mathrm{skew}}(p), B \in \mathbb{R}^{(n-p)\times p} \right\}.
\]

Consider the map $\varphi = \sigma_{I_{n\times p}}$, i.e.,
\begin{equation}  \label{eq:can-varphi}
    \varphi : \mathrm{O}(n) \to \mathrm{St}(n,p):
    Q \mapsto QI_{n\times p}.    
\end{equation}
The fiber above $\varphi(Q)$ is 
\[
\varphi^{-1}(\varphi(Q)) = Q \varphi^{-1}(I_{n\times p}) 
= \left\{ Q \begin{bmatrix} I & 0 \\ 0 & R_2 \end{bmatrix} \mid R_2 \in \mathrm{O}(n-p) \right\}.     
\]
At $Q$, the tangent space to the fiber (the \emph{vertical space}) is $Q\mathfrak{k}$ and its orthogonal complement (the \emph{horizontal space}) is $Q\mathfrak{m}$. 

The horizontal lift of
\[
\xi = Q \begin{bmatrix} A \\ H \end{bmatrix} 
\]
is
\begin{equation}  \label{eq:can-overline-xi}
\overline{\xi}_{Q} =  Q \begin{bmatrix} A & -H^\top \\ H & 0 \end{bmatrix}.   
\end{equation}

The metric $\overline{g}$ induces a well-defined Riemannian metric $g$ on $\mathrm{St}(n,p)$: for all $\xi_{\mathrm{a}} = Q \left[\begin{smallmatrix}A_{\mathrm{a}} \\ H_{\mathrm{a}}\end{smallmatrix}\right]$ and $\xi_{\mathrm{b}} = Q \left[\begin{smallmatrix}A_{\mathrm{b}} \\ H_{\mathrm{b}}\end{smallmatrix}\right]$,
\begin{equation}  \label{eq:can-g}
g_{Q I_{n\times p}}\left( \xi_{\mathrm{a}}, \xi_{\mathrm{b}} \right) 
:= \overline{g}_{Q}\left( \overline{\xi_{\mathrm{a}}}_{Q}, \overline{\xi_{\mathrm{b}}}_{Q} \right)
= \frac12 \mathrm{trace}(A_{\mathrm{a}}^\top A_{\mathrm{b}}) + \mathrm{trace}(H_{\mathrm{a}}^\top H_{\mathrm{b}}).     
\end{equation}
The map $\varphi$ from $\mathrm{O}(n)$ to $\mathrm{St}(n,p)$, respectively endowed with the metrics $\overline{g}$ and $g$, is a Riemannian submersion.
This yields the expression
\begin{equation}  \label{eq:can-geod-Stiefel}
\Exp_{Q I_{n\times p}}\left( Q \begin{bmatrix}A\\H\end{bmatrix} \right) 
= Q \exp_{\mathrm{m}} \begin{bmatrix} A & -H^\top \\ H & 0\end{bmatrix} I_{n\times p},     
\end{equation}
which is indeed~\eqref{eq:geod-Stiefel} with $\beta=\frac12$.

Regarding the derivative of the Riemannian exponential, we have:
\begin{proposition}  \label{prp:can-conj-Stiefel-gen}
Let the Stiefel manifold $\mathrm{St}(n,p)$ be endowed with the canonical metric,~\eqref{eq:can-g}, i.e.,~\eqref{eq:g} with $\beta=\frac12$. Let $X\in\mathrm{St}(n,p)$, $Q\in\mathrm{O}(n)$ such that
$Q I_{n\times p} = X$, 
$\xi\in \mathrm{T}_X\mathrm{St}(n,p)$   
and $\breve{\xi} \in \mathrm{T}_{\xi} \mathrm{T}_X \mathrm{St}(n,p) \simeq \mathrm{T}_X \mathrm{St}(n,p)$. 
Represent $\xi$ and $\breve{\xi}$ as in~\eqref{eq:TStAH}, namely 
\[
\xi = Q \begin{bmatrix} A \\ H \end{bmatrix}, \quad \breve{\xi} = Q \begin{bmatrix} \breve{A} \\ \breve{H}  \end{bmatrix},    
\]
where $A,\breve{A}\in\mathcal{S}_{\mathrm{skew}}(p)$ and $H,\breve{H}\in\mathbb{R}^{(n-p)\times p}$.
Following~\eqref{eq:can-overline-xi}, let
\[
\Omega := \begin{bmatrix} A & -H^\top \\ H & 0\end{bmatrix}, \
\breve{\Omega} := \begin{bmatrix}\breve{A} & -\breve{H}^\top \\ \breve{H} & 0\end{bmatrix}.
\]    
Then 
\begin{equation}  \label{eq:can-der}
\mathrm{D}\Exp_X(t\xi;\breve{\xi}) =
            \left.\frac{\mathrm{d}}{\mathrm{d}\epsilon}\right|_{\epsilon=0} Q \exp_{\mathrm{m}}(t\Omega + \epsilon \breve\Omega) 
            I_{n\times p}.
\end{equation}
\end{proposition}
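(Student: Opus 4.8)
The plan is to obtain \eqref{eq:can-der} directly from the closed-form geodesic formula \eqref{eq:can-geod-Stiefel}, without reconstructing the submersion chain rule that underlies the general case. The observation driving the argument is that $\Exp_X$ is a map from the \emph{vector space} $\mathrm{T}_X\mathrm{St}(n,p)$ into $\mathrm{St}(n,p)$, so that under the canonical identification $\mathrm{T}_{t\xi}\mathrm{T}_X\mathrm{St}(n,p) \simeq \mathrm{T}_X\mathrm{St}(n,p)$ its differential at the base point $t\xi$ in the direction $\breve{\xi}$ is just the ordinary directional derivative along the affine segment $\epsilon \mapsto t\xi + \epsilon\breve{\xi}$, i.e.
\[
\mathrm{D}\Exp_X(t\xi;\breve{\xi}) = \left.\frac{\mathrm{d}}{\mathrm{d}\epsilon}\right|_{\epsilon=0}\Exp_X(t\xi + \epsilon\breve{\xi}).
\]

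First I would verify that the segment stays in the tangent space so that \eqref{eq:can-geod-Stiefel} applies at every $\epsilon$: since $\mathrm{T}_X\mathrm{St}(n,p)$ is a linear subspace (see \eqref{eq:TStAH}) and $A,\breve{A}\in\mathcal{S}_{\mathrm{skew}}(p)$, $H,\breve{H}\in\mathbb{R}^{(n-p)\times p}$, the matrix $tA+\epsilon\breve{A}$ is skew-symmetric and $tH+\epsilon\breve{H}$ is arbitrary, so $t\xi+\epsilon\breve{\xi} = Q\left[\begin{smallmatrix} tA+\epsilon\breve{A} \\ tH+\epsilon\breve{H}\end{smallmatrix}\right]$ is again a valid tangent vector at $X$.

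Next I would substitute this tangent vector into \eqref{eq:can-geod-Stiefel}. The key bookkeeping step is recognizing that the skew-symmetric block matrix assembled from $tA+\epsilon\breve{A}$ and $tH+\epsilon\breve{H}$ is exactly $t\Omega + \epsilon\breve{\Omega}$, by linearity of the block-assembly map together with the definitions of $\Omega$ and $\breve{\Omega}$ given in the statement. Formula \eqref{eq:can-geod-Stiefel} then reads $\Exp_X(t\xi+\epsilon\breve{\xi}) = Q\exp_{\mathrm{m}}(t\Omega+\epsilon\breve{\Omega})I_{n\times p}$, and differentiating at $\epsilon=0$ yields precisely \eqref{eq:can-der}.

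There is no genuine obstacle here: the proposition is a transcription of the established exponential formula into the language of directional derivatives, so the ``hard part'' reduces to correctly identifying the affine variation of the initial velocity and checking the block-matrix algebra. The one point warranting care is the legitimacy of evaluating \eqref{eq:can-geod-Stiefel} along the whole segment rather than at a single tangent vector, which is guaranteed by the linearity noted above. As a consistency check, one may also observe that \eqref{eq:can-der} is the formal $\beta=\tfrac12$ specialization of \eqref{eq:noncan-der}: there $\Psi = -(1-2\beta)A$ and $\breve{\Psi} = -(1-2\beta)\breve{A}$ both vanish, so the trailing factor $\exp_{\mathrm{m}}(-t\Psi-\epsilon\breve{\Psi})$ collapses to $I_p$, reconciling the present proposition with Proposition~\ref{prp:conj-Stiefel-gen}, whose statement explicitly excludes $\beta=\tfrac12$.
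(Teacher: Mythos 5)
Your proof is correct and is, in substance, the proof the paper intends: since $\mathrm{T}_X\mathrm{St}(n,p)$ is a vector space, $\mathrm{D}\Exp_X(t\xi;\breve{\xi})$ is the ordinary directional derivative along the affine segment $\epsilon\mapsto t\xi+\epsilon\breve{\xi}$, and plugging that segment into~\eqref{eq:can-geod-Stiefel} (legitimate because the lift $\xi\mapsto\Omega$ is linear, so the block matrix is $t\Omega+\epsilon\breve{\Omega}$) gives~\eqref{eq:can-der}. This coincides with the paper's submersion chain-rule derivation, since~\eqref{eq:can-geod-Stiefel} is precisely $\varphi\circ\Exp_Q$ composed with the linear horizontal lift.
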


\section{Preparatory lemmas}  \label{sec:lemmas}

The next technical result emerged from insight gathered in~\cite[\S 5.2.1]{Ren2013} and further numerical experiments. It gives a nontrivial root to~\eqref{eq:can-der}.

\begin{lemma}  \label{lmm:DexpOmega}
Given a positive integer $m$, let $I_m$ denote the identity matrix of size $m$, $D\in\mathbb{R}^{m\times m}$ nonzero, and $w\in\mathbb{R}.$ 
Further let  
\[
    \Omega := \begin{bmatrix}0 & -I_m \\ I_m & 0\end{bmatrix} \quad 
    \text{and} \quad
    \breve\Omega := \begin{bmatrix} -w D & D \\ D & 0 \end{bmatrix}.
\]
Then 
\begin{equation}  \label{eq:DexpOmega}
\left.\frac{\mathrm{d}}{\mathrm{d}\epsilon}\right|_{\epsilon=0} \exp_{\mathrm{m}}(t\Omega + \epsilon \breve\Omega) 
I_{2m\times m} = 0   
\end{equation}
if and only if
\begin{subequations}  \label{eqs:tw}
\begin{equation}  \label{eq:tw-t}
    \frac{\sin t}{t} + \cos t = 0    
\end{equation}
and 
\begin{equation}
    w = \frac{2}{t}
\end{equation} 
\end{subequations}
or 
\begin{equation}  \label{eq:tw-alt}
t = k \pi,\ k\in\mathbb{Z}\setminus\{0\} 
\quad \text{and} \quad w = 0.
\end{equation}
In other words, the first $m$ columns of the derivative of the matrix exponential at $t\Omega$ along $\breve\Omega$ are zero if and only if~\eqref{eqs:tw} or~\eqref{eq:tw-alt} holds. In particular, the smallest positive $t$ that satisfies~\eqref{eq:DexpOmega} is the smallest positive solution of~\eqref{eq:tw-t}. 
\end{lemma}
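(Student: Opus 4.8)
The plan is to evaluate the left-hand side of~\eqref{eq:DexpOmega} in closed form and then read off exactly when it vanishes. The starting point is the standard (Duhamel) integral representation of the directional derivative of the matrix exponential,
\[
\left.\frac{\mathrm{d}}{\mathrm{d}\epsilon}\right|_{\epsilon=0} \exp_{\mathrm{m}}(t\Omega + \epsilon\breve\Omega) = \int_0^1 \exp_{\mathrm{m}}(st\Omega)\,\breve\Omega\,\exp_{\mathrm{m}}((1-s)t\Omega)\,\mathrm{d}s,
\]
combined with the observation that $\Omega^2 = -I_{2m}$, so that $\Omega$ is a complex structure and $\exp_{\mathrm{m}}(r\Omega) = \cos(r)\,I_{2m} + \sin(r)\,\Omega$ for every $r\in\mathbb{R}$. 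This furnishes the explicit $2\times2$-block form of both exponential factors, with entries $\cos$ and $\sin$ of $st$ and of $(1-s)t$.

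Next I would multiply $I_{2m\times m} = \left[\begin{smallmatrix} I_m \\ 0 \end{smallmatrix}\right]$ into $\exp_{\mathrm{m}}((1-s)t\Omega)$, apply $\breve\Omega$, and then $\exp_{\mathrm{m}}(st\Omega)$, carefully factoring the left matrix factor $D$ out of every term. Each resulting scalar coefficient is a product of a sine/cosine of $st$ with a sine/cosine of $(1-s)t$, which I would linearize by product-to-sum identities so that every summand is $\cos t$, $\sin t$, $\cos((1-2s)t)$, or $\sin((1-2s)t)$. The only nontrivial integrals in $s$ over $[0,1]$ that then occur are $\int_0^1 \sin((1-2s)t)\,\mathrm{d}s = 0$ (the integrand is odd about $s=\tfrac12$) and $\int_0^1 \cos((1-2s)t)\,\mathrm{d}s = \tfrac{\sin t}{t}$, alongside the trivial $\int_0^1 \cos t\,\mathrm{d}s = \cos t$ and $\int_0^1 \sin t\,\mathrm{d}s = \sin t$. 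Collecting terms, I expect the compact expression
\[
\left.\frac{\mathrm{d}}{\mathrm{d}\epsilon}\right|_{\epsilon=0} \exp_{\mathrm{m}}(t\Omega + \epsilon\breve\Omega)\,I_{2m\times m}
= \begin{bmatrix} -\tfrac{w}{2}\bigl(\tfrac{\sin t}{t} + \cos t\bigr)\,D \\ \bigl(\tfrac{\sin t}{t} - \tfrac{w}{2}\sin t\bigr)\,D \end{bmatrix},
\]
where $\tfrac{\sin t}{t}$ is read as $1$ at $t=0$ by continuity.

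Since $D\neq0$, this block vector vanishes if and only if both scalar factors vanish, i.e. $\tfrac{w}{2}\bigl(\tfrac{\sin t}{t}+\cos t\bigr) = 0$ and $\tfrac{\sin t}{t} - \tfrac{w}{2}\sin t = 0$. I would then split into cases according to the first equation. If $\tfrac{\sin t}{t}+\cos t = 0$, then necessarily $\sin t\neq0$ (otherwise $\cos t=0$ as well, which is impossible, and $t=0$ is excluded since there the expression equals $2$), so the second equation may be divided by $\sin t$ to give $w = \tfrac{2}{t}$; this is~\eqref{eqs:tw}. Otherwise the first equation forces $w=0$, whereupon the second reduces to $\tfrac{\sin t}{t}=0$, i.e. $\sin t = 0$ with $t\neq0$ (consistent with $\breve\Omega I_{2m\times m}\neq0$ at $t=0$), yielding $t = k\pi$, $k\in\mathbb{Z}\setminus\{0\}$; this is~\eqref{eq:tw-alt}. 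The two families do not overlap, since $\tfrac{\sin t}{t}+\cos t = (-1)^k \neq 0$ at $t = k\pi$. Finally, for the ``in particular'' claim I would note that $\sin t + t\cos t > 0$ on $(0,\tfrac{\pi}{2}]$ while it equals $-\pi<0$ at $t=\pi$, so the smallest positive root of~\eqref{eq:tw-t} lies in $(\tfrac{\pi}{2},\pi)$ and is therefore strictly smaller than the value $t=\pi$, the smallest $t$ arising from~\eqref{eq:tw-alt}.

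The computation is essentially routine; the main obstacle is organizational rather than conceptual, namely keeping the product-to-sum bookkeeping clean enough that the two key $s$-integrals (and the cancellation of the $\sin((1-2s)t)$ contributions) emerge transparently, and then conducting the final case analysis carefully so that no spurious solution is introduced and the division by $\sin t$ is justified.
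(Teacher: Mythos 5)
Your proposal is correct and follows essentially the same route as the paper: the Karplus--Schwinger/Duhamel integral formula for the derivative of the exponential, the explicit block form $\exp_{\mathrm{m}}(r\Omega)=\cos(r)I_{2m}+\sin(r)\Omega$, product-to-sum identities to evaluate the $\sigma$-integrals, and a final case analysis on the two scalar coefficients; your closed-form blocks $-\tfrac{w}{2}\bigl(\tfrac{\sin t}{t}+\cos t\bigr)D$ and $\tfrac12\bigl(\tfrac{2}{t}-w\bigr)\sin t\,D$ agree exactly with the paper's~\eqref{eq:tr} and~\eqref{eq:omega}. The only differences are cosmetic (order of the factors in the integrand, and a slightly more explicit treatment of the $t=0$ case and of the ``in particular'' sign analysis).
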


\begin{proof}
    It is readily seen that neither~\eqref{eq:DexpOmega} nor~\eqref{eqs:tw} hold when $t=0$. It remains to consider the case $t\neq0$, which gives us license to divide by $t$ in forthcoming developments.

According to a classical formula for the derivative of the matrix exponential, long known in the physics literature~\cite{KarplusSchwinger1948} (or see~\cite{Mathias1992}), 
\begin{equation}  \label{eq:Dexp}
    \left.\frac{\mathrm{d}}{\mathrm{d}\epsilon}\right|_{\epsilon=0} \exp_{\mathrm{m}}(t\Omega + \epsilon \breve\Omega) = \int_0^1 \exp_{\mathrm{m}}((1-\sigma)t\Omega) \breve\Omega \exp_{\mathrm{m}}(\sigma t\Omega) \mathrm{d}\sigma.
\end{equation}
In view of the formula (which follows from~\cite[\S 2.1]{GallierQuaintance2020}) 
\begin{equation}  \label{eq:expstO}
\exp_{\mathrm{m}}(\sigma t \Omega) = \begin{bmatrix} \cos(\sigma t) I_m & -\sin(\sigma t) I_m \\ \sin(\sigma t) I_m & \cos(\sigma t)I_m \end{bmatrix},     
\end{equation}
the top-left $m\times m$ block of~\eqref{eq:Dexp} expands as
\[
\int_0^1 \left(
    -w \cos(1-\sigma)t \cos \sigma t - \sin (1-\sigma)t \cos \sigma t + \cos(1-\sigma)t \sin \sigma t
    \right) \mathrm{d}\sigma \ D,
\]
which, by product-to-sum trigonometric identities, is equal to
\[
    \frac12 \int_0^1 \left(
        -w \cos(1-2\sigma)t - w \cos t - \sin t - \sin(1-2\sigma) t + \sin t - \sin (1-2\sigma)t
        \right) \mathrm{d}\sigma \ D. 
\]
This further simplifies to 
\begin{equation}  \label{eq:tr}
-\frac{w}2 \left(\frac{\sin t}{t} + \cos t\right) D.    
\end{equation}

As for the bottom-left $m\times m$ block of~\eqref{eq:Dexp}, it expands as
\[
\int_0^1 \left( 
    -w \sin(1-\sigma)t \cos \sigma t + \cos (1-\sigma)t \cos \sigma t + \sin(1-\sigma)t \sin \sigma t
    \right) \mathrm{d}\sigma \ D,
\]
which, by a similar route, reduces to
\begin{equation}  \label{eq:omega}
    \frac12 \left(-w + \frac2t\right) \sin t \ D.
\end{equation}

Recall that $D$ is nonzero. The first $m$ columns of~\eqref{eq:Dexp} are thus zero if and only if both~\eqref{eq:tr} and~\eqref{eq:omega} equal zero, i.e.,~\eqref{eqs:tw} or~\eqref{eq:tw-alt} holds.
\end{proof}

The next generalization readily follows. It gives a nontrivial root to~\eqref{eq:noncan-der} for $\beta\neq1$, i.e., $\alpha\neq-\frac12$. 

\begin{lemma}  \label{lmm:DexpOmegaPsi}
Let $m$, $D$, $\Omega$ and $\breve\Omega$ be as in Lemma~\ref{lmm:DexpOmega}. Further let $\alpha\neq-\frac12$, 
\[
    \Psi := 0 \quad \text{and} \quad \breve\Psi := \alpha w D.    
\] 
Then
\begin{equation}  \label{eq:DexpIexp}
\left.\frac{\mathrm{d}}{\mathrm{d}\epsilon}\right|_{\epsilon=0} \exp_{\mathrm{m}}(t\Omega + \epsilon \breve\Omega) 
I_{2m\times m} \exp_{\mathrm{m}}(-t\Psi - \epsilon \breve\Psi) = 0   
\end{equation}
if and only if
\begin{subequations}  \label{eqs:twalpha}
\begin{equation}  \label{eq:talpha}
    \frac{\sin t}{t} + (1+2\alpha) \cos t = 0    
\end{equation}     
and 
\begin{equation}  \label{eq:w}
    w = \frac{2}{1+2\alpha} \frac{1}{t}
\end{equation} 
\end{subequations}
or 
\begin{equation}  \label{eq:twalpha-alt}
t = k \pi,\ k\in\mathbb{Z}\setminus\{0\} 
\quad \text{and} \quad w = 0.
\end{equation}
In particular, the smallest positive $t$ that satisfies~\eqref{eq:DexpIexp} is the smallest positive solution of~\eqref{eq:talpha} when $\alpha > -\frac12$ (i.e., $0 < \beta < 1$) and $\pi$ when $-1 < \alpha < -\frac12$ (i.e., $\beta > 1$).
\end{lemma}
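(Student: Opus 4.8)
The plan is to exploit that $\Psi = 0$, so the factor $\exp_{\mathrm{m}}(-t\Psi - \epsilon\breve\Psi)$ reduces to $\exp_{\mathrm{m}}(-\epsilon\alpha w D)$, and then to differentiate the product in~\eqref{eq:DexpIexp} by the Leibniz rule at $\epsilon = 0$. The first resulting term, $\bigl(\left.\frac{\mathrm{d}}{\mathrm{d}\epsilon}\right|_{\epsilon=0}\exp_{\mathrm{m}}(t\Omega + \epsilon\breve\Omega)\bigr) I_{2m\times m}$, is precisely the object whose top-left and bottom-left $m\times m$ blocks were already computed in the proof of Lemma~\ref{lmm:DexpOmega}, namely~\eqref{eq:tr} and~\eqref{eq:omega}; so no integral needs to be redone. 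The second term is $\exp_{\mathrm{m}}(t\Omega) I_{2m\times m}\bigl(\left.\frac{\mathrm{d}}{\mathrm{d}\epsilon}\right|_{\epsilon=0}\exp_{\mathrm{m}}(-\epsilon\alpha w D)\bigr)$, which is explicit: the first factor is $\left[\begin{smallmatrix}\cos t\, I_m \\ \sin t\, I_m\end{smallmatrix}\right]$ by~\eqref{eq:expstO} evaluated at $\sigma t = t$, and the derivative of the matrix exponential of $-\epsilon\alpha w D$ at $\epsilon = 0$ is simply $-\alpha w D$. The whole computation thus reduces to adding a known block matrix to $-\alpha w D$ scaled by $\cos t$ and $\sin t$.

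Adding the two contributions, I expect the top block of~\eqref{eq:DexpIexp} to collapse to $-\frac{w}{2}\bigl(\frac{\sin t}{t} + (1+2\alpha)\cos t\bigr)D$ and the bottom block to $\frac{\sin t}{2}\bigl(\frac{2}{t} - (1+2\alpha)w\bigr)D$, where the factor $(1+2\alpha)$ absorbs the $\cos t$ contribution of the second term. Since $D\neq 0$, \eqref{eq:DexpIexp} holds iff both scalar factors vanish. I would then split on $w$. If $w\neq 0$, the top factor forces~\eqref{eq:talpha}; and because $\alpha \neq -\frac{1}{2}$, \eqref{eq:talpha} cannot hold with $\sin t = 0$ (that would require $(1+2\alpha)\cos t = 0$ with $\cos t = \pm 1$), so $\sin t\neq 0$ and the bottom factor then forces $\frac{2}{t} = (1+2\alpha)w$, i.e.~\eqref{eq:w}; this is the alternative~\eqref{eqs:twalpha}, the division by $1+2\alpha$ being legitimate precisely because $\alpha\neq-\frac12$. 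If instead $w = 0$, the top factor vanishes automatically and the bottom factor reduces to $\frac{\sin t}{t}D$, whose vanishing is equivalent to $t = k\pi$; this is the alternative~\eqref{eq:twalpha-alt}.

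For the final ``in particular'' claim I would compare the smallest positive root of~\eqref{eq:talpha} with the smallest admissible $k\pi$, namely $\pi$. Writing $f(t) = \frac{\sin t}{t} + (1+2\alpha)\cos t$, for $\alpha > -\frac{1}{2}$ one has $f(0^+) = 2(1+\alpha) > 0$ and $f(\pi) = -(1+2\alpha) < 0$, so the intermediate value theorem places a root of~\eqref{eq:talpha} strictly inside $(0,\pi)$, and the smallest positive $t$ is that root. For $-1 < \alpha < -\frac{1}{2}$ one has $|1+2\alpha| < 1$, and I would show $f > 0$ on $(0,\pi]$ so that no root of~\eqref{eq:talpha} precedes $\pi$: on $[\pi/2,\pi)$ the term $(1+2\alpha)\cos t \geq 0$ (as $\cos t \leq 0$ and $1+2\alpha < 0$) while $\frac{\sin t}{t} > 0$, and $f(\pi) = -(1+2\alpha) > 0$; on $(0,\pi/2)$ the elementary inequality $\tan t > t$, i.e.\ $\frac{\sin t}{t} > \cos t$, together with $|1+2\alpha| < 1$ and $\cos t > 0$ gives $\frac{\sin t}{t} > \cos t > |1+2\alpha|\cos t = -(1+2\alpha)\cos t$, whence $f(t) > 0$. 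Thus the minimizer is $t = \pi$ (with $w = 0$). I expect this sign analysis for $\beta > 1$, resting on $\tan t > t$ on $(0,\pi/2)$, to be the only nonroutine step; the rest is a direct reuse of Lemma~\ref{lmm:DexpOmega} and bookkeeping.
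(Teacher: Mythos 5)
Your proposal is correct and follows essentially the same route as the paper: differentiate the product by the Leibniz rule, reuse the two blocks \eqref{eq:tr} and \eqref{eq:omega} from Lemma~\ref{lmm:DexpOmega} for the first term, compute the second term explicitly via \eqref{eq:expstO} to get exactly the blocks $-\frac{w}{2}\bigl(\frac{\sin t}{t}+(1+2\alpha)\cos t\bigr)D$ and $\frac{\sin t}{2}\bigl(\frac{2}{t}-(1+2\alpha)w\bigr)D$, and then do a sign analysis of $\frac{\sin t}{t}+(1+2\alpha)\cos t$ on $(0,\pi]$. The only (immaterial) difference is that on $(0,\frac{\pi}{2})$ you invoke $\tan t>t$ where the paper argues via $\partial_t F(\beta,t)>0$ with $F(\beta,0)=0$; both are valid.
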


\begin{proof}
The case $t=0$ goes as in the proof of Lemma~\ref{lmm:DexpOmega}. For the case $t\neq0$, by the product rule, 
\begin{multline*}
    \left.\frac{\mathrm{d}}{\mathrm{d}\epsilon}\right|_{\epsilon=0} \exp_{\mathrm{m}}(t\Omega + \epsilon \breve\Omega) 
    I_{2m\times m} \exp_{\mathrm{m}}(-t\Psi - \epsilon \breve\Psi)
    \\ = \left(\left.\frac{\mathrm{d}}{\mathrm{d}\epsilon}\right|_{\epsilon=0} \exp_{\mathrm{m}}(t\Omega + \epsilon \breve\Omega) 
    I_{2m\times m}\right)
    - 
    \exp_{\mathrm{m}}(t\Omega) I_{2m\times m} \breve\Psi.
\end{multline*}
Lemma~\ref{lmm:DexpOmega} gives an expression for the first term and, using~\eqref{eq:expstO} for the second term, the whole expression is found to reduce to
\[
\begin{bmatrix}
    -\frac{w}{2} \left(\frac{\sin t}{t} + \cos t + 2\alpha \cos t\right) D
    \\
    \frac12 \left( -w + \frac2t - 2\alpha w \right) \sin t \, D
\end{bmatrix}.    
\]
This is zero if and only if~\eqref{eqs:twalpha} or~\eqref{eq:twalpha-alt} holds.

To prove the last sentence of the lemma, let us analyze the sign of the left-hand side of~\eqref{eq:talpha} for $t,\beta>0$, which is equal to the sign of
\begin{equation}  \label{eq:F-beta-t}
    F(\beta,t) := \sin t + \frac{1-\beta}{\beta} t \cos t.
\end{equation}
First consider $0 < \beta < 1$; then $F(\beta,t) > 0$ for all $t\in(0,\frac\pi2]$ and $F(\beta,\pi) = -\frac{1-\beta}{\beta} \pi < 0$, thus~\eqref{eq:talpha} has a solution in $(\frac\pi2,\pi)$. Now consider $\beta \geq 1$. Then $F(\beta,t) > 0$ for all $t\in[\frac\pi2,\pi)$. Furthermore, $F(\beta,0) = 0$ and, for all $t\in(0,\frac\pi2)$, it holds that $\partial_t F(\beta,t) = \cos t + \frac{1-\beta}{\beta} (\cos t - t \sin t) = \frac1\beta \cos t - \frac{1-\beta}{\beta} t \sin t > 0$, which implies that $F(\beta,t) > 0$ for all $t\in(0,\frac\pi2)$. Hence $F(\beta,t)$ has no root $t$ in $(0,\pi)$, thus~\eqref{eq:talpha} has no solution in $[0,\pi)$.
\end{proof}

The following final preparatory lemma can be interpreted as the limit of the latter when $\alpha\to-\frac12$. It gives a nontrivial root to~\eqref{eq:noncan-der} for the remaining case: $\beta=1$, i.e., $\alpha=-\frac12$.

\begin{lemma}  \label{lmm:DexpOmegaPsi-Euc}
    Let $m$, $D$, $\Omega$, and $\Psi$ be as in Lemma~\ref{lmm:DexpOmegaPsi}. Now let $\alpha=-\frac12$,
    \[
        \breve\Omega := \begin{bmatrix} D & 0 \\ 0 & 0 \end{bmatrix} \quad \text{and} \quad \breve\Psi := -\alpha D.    
    \] 
    Then
    \[
    \left.\frac{\mathrm{d}}{\mathrm{d}\epsilon}\right|_{\epsilon=0} \exp_{\mathrm{m}}(t\Omega + \epsilon \breve\Omega) 
    I_{2m\times m} \exp_{\mathrm{m}}(-t\Psi - \epsilon \breve\Psi) = 0   
    \]
    if and only if
    \begin{equation}  \label{eq:tEuc}
        \frac{\sin t}{t} = 0,    
    \end{equation}
    i.e., $t = k\pi$, $k\in\mathbb{Z}\setminus\{0\}$.
    Observe that the smallest positive solution of~\eqref{eq:tEuc} is $\pi$.      
\end{lemma}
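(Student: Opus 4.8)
The plan is to mirror the proofs of Lemmas~\ref{lmm:DexpOmega} and~\ref{lmm:DexpOmegaPsi}, since the structure is identical and only the matrices $\breve\Omega$ and $\breve\Psi$ have changed. First I would dispose of the case $t=0$ exactly as before: the derivative vanishes trivially there, whereas $\frac{\sin t}{t}\to 1\neq0$, so neither side of the claimed equivalence holds. This leaves the case $t\neq0$, which again licenses division by $t$ in what follows.

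For $t\neq0$, since $\Psi=0$ the factor $\exp_{\mathrm{m}}(-t\Psi-\epsilon\breve\Psi)$ reduces to $\exp_{\mathrm{m}}(-\epsilon\breve\Psi)$, and the product rule splits the derivative into two terms, exactly as in the proof of Lemma~\ref{lmm:DexpOmegaPsi}:
\[
\left(\left.\frac{\mathrm{d}}{\mathrm{d}\epsilon}\right|_{\epsilon=0}\exp_{\mathrm{m}}(t\Omega+\epsilon\breve\Omega)I_{2m\times m}\right) - \exp_{\mathrm{m}}(t\Omega)I_{2m\times m}\breve\Psi.
\]
The first term I would evaluate via the integral formula~\eqref{eq:Dexp} together with the block form~\eqref{eq:expstO}. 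The difference from Lemma~\ref{lmm:DexpOmega} is the new $\breve\Omega=\left[\begin{smallmatrix} D & 0 \\ 0 & 0 \end{smallmatrix}\right]$: acting on $\exp_{\mathrm{m}}(\sigma t\Omega)I_{2m\times m}=\left[\begin{smallmatrix}\cos(\sigma t)I_m \\ \sin(\sigma t)I_m\end{smallmatrix}\right]$, it retains only the $\cos(\sigma t)D$ contribution in the top block, so the integrand is $\exp_{\mathrm{m}}((1-\sigma)t\Omega)\left[\begin{smallmatrix}\cos(\sigma t)D \\ 0\end{smallmatrix}\right]$. The same product-to-sum identities used earlier then give, after integration over $\sigma\in[0,1]$, a top block $\frac12\bigl(\frac{\sin t}{t}+\cos t\bigr)D$ and a bottom block $\frac12\sin t\,D$.

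The second term is immediate from~\eqref{eq:expstO}: with $\breve\Psi=-\alpha D=\frac12 D$, it equals $-\left[\begin{smallmatrix}\cos t\,I_m \\ \sin t\,I_m\end{smallmatrix}\right]\frac12 D$. Adding the two terms, the $\cos t\,D$ contributions cancel in the top block while the two $\frac12\sin t\,D$ contributions cancel in the bottom block, leaving the single block
\[
\begin{bmatrix}\tfrac12\tfrac{\sin t}{t}D \\ 0\end{bmatrix}.
\]
Since $D$ is nonzero, this vanishes if and only if $\frac{\sin t}{t}=0$, i.e.\ $t=k\pi$ with $k\in\mathbb{Z}\setminus\{0\}$, which is exactly~\eqref{eq:tEuc}. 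The only substantive computation is the trigonometric integration in the first term, and the only point requiring care is verifying the twofold cancellation between the two terms; both follow the template already established in the previous two lemmas, so I expect no genuine obstacle.
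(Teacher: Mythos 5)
Your computation for $t\neq0$ is correct and is exactly the paper's (unwritten) calculation: the paper merely asserts that the expression reduces to $\left[\begin{smallmatrix}\frac{\sin t}{2t}D\\0\end{smallmatrix}\right]$, which is precisely what your twofold cancellation yields. One slip in the $t=0$ case: the derivative does \emph{not} vanish there---it equals $\breve\Omega I_{2m\times m}-I_{2m\times m}\breve\Psi=\left[\begin{smallmatrix}\frac12 D\\0\end{smallmatrix}\right]\neq0$---so the correct observation (and the one you actually need to conclude that neither side of the equivalence holds at $t=0$) is the opposite of ``the derivative vanishes trivially there.''
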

\begin{proof}
The case $t=0$ goes as in the previous proofs. For the case $t\neq 0$, proceeding as in the previous results, one obtains that 
\[
    \left.\frac{\mathrm{d}}{\mathrm{d}\epsilon}\right|_{\epsilon=0} \exp_{\mathrm{m}}(t\Omega + \epsilon \breve\Omega) 
    I_{2m\times m} \exp_{\mathrm{m}}(-t\Psi - \epsilon \breve\Psi)
    =
    \begin{bmatrix} \frac{\sin t}{2t} D
        \\ 0_{m\times m}
    \end{bmatrix},
\]
and the result follows.
\end{proof}

\section{Conjugate points}
\label{sec:conjugate}

Consider the Stiefel manifold~\eqref{eq:Stiefel} 
endowed with the metric $g$~\eqref{eq:g}.
In this section, we restrict to the case $2 \leq p \leq n-2$, where the preparatory lemmas of section~\ref{sec:lemmas} can be leveraged to find conjugate points. Specifically, we combine Proposition~\ref{prp:conj-Stiefel-gen} and Proposition~\ref{prp:can-conj-Stiefel-gen} with the lemmas of section~\ref{sec:lemmas} to obtain conjugate points on the Stiefel manifold. The length of the geodesic along which those points are conjugate is therefore an upper bound on the conjugate radius of the Stiefel manifold.

\begin{theorem}  \label{thm:conjugate}
Let $2 \leq p \leq n-2$ and consider  $\mathrm{St}_\beta(n,p)$, the Stiefel manifold~\eqref{eq:Stiefel} endowed with the Riemannian metric $g$ as in~\eqref{eq:g} parameterized by $\beta>0$. Let $\alpha = \frac1{2\beta}-1$ as in~\eqref{eq:alpha-beta}. Let $t^{\mathrm{r}}_\beta$ denote the smallest positive solution of~\eqref{eq:talpha}, namely
\[
    t^{\mathrm{r}}_\beta := \min\left\{t>0 \mid \frac{\sin t}{t} + (1+2\alpha) \cos t = 0 \right\}.   
\]
Let $Q\in\mathrm{O}(n)$ and consider $\xi = Q \left[\begin{smallmatrix}A\\H\end{smallmatrix}\right] \in \mathrm{T}_{Q I_{n\times p}} \mathrm{St}(n,p)$ with 
\[
    A := 0_{p\times p} \quad \text{and} \quad H := \begin{bmatrix} I_2 & 0_{2\times(p-2)} \\ 0_{(n-p-2)\times 2} & 0_{(n-p-2)\times(p-2)} \end{bmatrix}.
\] 
Let $\gamma$ denote the geodesic starting at $Q I_{n\times p}$ in the direction of $\xi$, namely, in view of~\eqref{eq:geod-Stiefel}, $\gamma(t) = Q \exp_{\mathrm{m}}(t\Omega) I_{n\times p} \exp_{\mathrm{m}}(-t\Psi) = Q \exp_{\mathrm{m}}(t\Omega) I_{n\times p}$ where
\[
    \Omega := \begin{bmatrix}2\beta A & -H^\top \\ H & 0\end{bmatrix}
\quad \text{and} \quad
\Psi := -(1-2\beta) A = 0_{p\times p}.
\]
Then $\gamma(0) = Q I_{n\times p}$ and $\gamma(t^{\mathrm{r}}_\beta)$ are conjugate along $\gamma$. Moreover, $\gamma(0)$ and $\gamma(\pi)$ are conjugate along $\gamma$.

Furthermore, for all $T>0$, the length (in the sense of~\eqref{eq:g}) of $\gamma$ between $t=0$ and $t=T$ is $T\sqrt2$. Thus
\begin{equation}  \label{eq:conjugate}
\mathrm{conj}_{\mathrm{St}_\beta(n,p)} \leq \min\{t^{\mathrm{r}}_\beta,\pi\}\sqrt2,
\end{equation}
where the $\min$ is achieved by $t^{\mathrm{r}}_\beta$ when $\beta \leq 1$ and $\pi$ when $\beta \geq 1$.
\end{theorem}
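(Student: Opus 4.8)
The plan is to reduce the Stiefel computation, via the submersion formulas of Propositions~\ref{prp:conj-Stiefel-gen} and~\ref{prp:can-conj-Stiefel-gen}, to the scalar root conditions already established in the preparatory lemmas of section~\ref{sec:lemmas}, taking $m=2$ therein. The chosen direction $\xi=Q[A;H]$ with $A=0$ and $H$ supported on the top-left $2\times2$ identity block produces $\Omega=\begin{bmatrix} 0 & -H^\top \\ H & 0\end{bmatrix}$ and $\Psi=0$; after reordering the four active indices as $(1,2,p+1,p+2)$, the restriction of $\Omega$ to these coordinates is exactly $\begin{bmatrix} 0_2 & -I_2 \\ I_2 & 0_2\end{bmatrix}$, i.e.\ the $\Omega$ of Lemma~\ref{lmm:DexpOmega} with $m=2$, while $\Omega$ vanishes on the remaining coordinates. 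This is where the hypotheses $p\geq2$ and $n-p\geq2$ enter: they guarantee enough columns and rows to house the $I_2$ block.

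First I would treat conjugacy at $\gamma(t^{\mathrm{r}}_\beta)$. For $\beta\neq\tfrac12,1$ I would fix a nonzero skew-symmetric $2\times2$ matrix $D$ and set the active blocks of $\breve A,\breve H$ so that $\breve\Omega,\breve\Psi$ match the forms $\begin{bmatrix} -wD & D \\ D & 0\end{bmatrix}$ and $\alpha wD$ of Lemma~\ref{lmm:DexpOmegaPsi}; one checks that $2\beta\breve A=-wD$, $\breve H=D$, and $\breve\Psi=-(1-2\beta)\breve A=\alpha wD$ are mutually consistent and skew, and sets these blocks to zero outside the active indices. Because $\Omega,\breve\Omega$ and $\Psi,\breve\Psi$ are block-diagonal for the active/inactive split, the matrix exponentials factor: the inactive columns of the $n\times p$ derivative in~\eqref{eq:noncan-der} are constant in $\epsilon$ and contribute zero, whereas the two active columns are precisely the first $m=2$ columns governed by Lemma~\ref{lmm:DexpOmegaPsi}. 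Left-multiplication by the invertible $Q$ preserves vanishing, so~\eqref{eq:noncan-der} is zero exactly when~\eqref{eqs:twalpha} holds; taking $t=t^{\mathrm{r}}_\beta$ and $w=\frac{2}{1+2\alpha}\frac{1}{t^{\mathrm{r}}_\beta}$ yields a nonzero $\breve\xi$ (since $\breve H=D\neq0$) in the kernel of $\mathrm{D}\Exp(t^{\mathrm{r}}_\beta\xi)$, so $\gamma(t^{\mathrm{r}}_\beta)$ is conjugate. The case $\beta=\tfrac12$ is identical but invokes Proposition~\ref{prp:can-conj-Stiefel-gen} and Lemma~\ref{lmm:DexpOmega}, and the case $\beta=1$ uses Lemma~\ref{lmm:DexpOmegaPsi-Euc} with the matching $\breve A=\tfrac12 D$, $\breve H=0$, $\breve\Psi=\tfrac12 D$.

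Conjugacy at $\gamma(\pi)$ follows the same route from the alternative branch~\eqref{eq:twalpha-alt} (or~\eqref{eq:tEuc} when $\beta=1$): taking $t=\pi$, $w=0$ gives $\breve A=0$, $\breve H=D\neq0$, hence again a nonzero kernel element, so $\gamma(\pi)$ is conjugate. For the length, geodesics have constant speed, so the length of $\gamma$ on $[0,T]$ is $T\sqrt{g(\xi,\xi)}$; with $A=0$ the metric~\eqref{eq:g} (or~\eqref{eq:can-g}) gives $g(\xi,\xi)=\mathrm{trace}(H^\top H)=\mathrm{trace}(I_2)=2$, whence length $T\sqrt2$. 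Evaluating at the two conjugate times produces tangent vectors of $g$-norm $t^{\mathrm{r}}_\beta\sqrt2$ and $\pi\sqrt2$, so the conjugate radius is at most their minimum, which is~\eqref{eq:conjugate}; that the minimizer is $t^{\mathrm{r}}_\beta$ for $\beta\leq1$ and $\pi$ for $\beta\geq1$ follows from the sign analysis of $F(\beta,t)$ in the proof of Lemma~\ref{lmm:DexpOmegaPsi}, which places $t^{\mathrm{r}}_\beta$ in $(\tfrac\pi2,\pi)$ for $\beta<1$, at $\pi$ for $\beta=1$, and beyond $\pi$ for $\beta>1$. The main obstacle is the bookkeeping of the active/inactive block decomposition: verifying that the inactive columns genuinely drop out and that the three $\breve\xi$-matchings reproduce the lemmas' hypotheses exactly in each $\beta$-regime. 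Once that reduction is in place, the root conditions and the length computation are immediate.
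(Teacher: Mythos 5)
Your proposal is correct and follows essentially the same route as the paper's proof: the same choice of perturbation blocks $\breve A=-\tfrac{w}{2\beta}D$, $\breve H=D$ (resp.\ $\breve A=\tfrac12 D$, $\breve H=0$ for $\beta=1$), the same permutation of the active indices $(1,2,p+1,p+2)$ to reduce to the $m=2$ lemmas, the same $w=0$ variant for $\gamma(\pi)$, and the same length and sign-analysis arguments. No gaps.
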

\begin{proof}
We first prove the conjugacy of $\gamma(0)$ and $\gamma(t^{\mathrm{r}}_\beta)$ under the assumptions of the theorem, distinguishing three cases.

\underline{Case~1}: non-Euclidean noncanonical case, i.e., $\alpha \notin \{-\frac12,0\}$ or equivalently $\beta \notin \{1,\frac12\}$. Let $D\in\mathcal{S}_{\mathrm{skew}}(2)$ be nonzero, $w := \frac{2}{1+2\alpha} \frac{1}{t^{\mathrm{r}}_\beta}$ according to~\eqref{eq:w}, and $\breve\xi := Q \left[\begin{smallmatrix} \breve{A} \\ \breve{H} \end{smallmatrix}\right]$ with
\[
\breve{A} := \begin{bmatrix} -\frac{w}{2\beta} D & 0_{2\times(p-2)} \\ 0_{(p-2)\times2} & 0_{(p-2)\times(p-2)} \end{bmatrix}, 
\quad \breve{H} := \begin{bmatrix} D & 0_{2\times(p-2)} \\ 0_{(n-p-2)\times2} & 0_{(n-p-2)\times(p-2)} \end{bmatrix}.
\]
Based on Proposition~\ref{prp:conj-Stiefel-gen} and Lemma~\ref{lmm:DexpOmegaPsi}, we now prove that $\mathrm{D}\Exp_X(t^{\mathrm{r}}_\beta \xi;\breve{\xi}) = 0$, which is the conjugacy result. 

Define $\breve\Omega$ and $\breve\Psi$ as in Proposition~\ref{prp:conj-Stiefel-gen}. Let $\Omega_4$, $\Psi_2$, $\breve\Omega_4$, and $\breve\Psi_2$ (where the subscript recalls the dimension of the matrices) denote the like-named matrices of Lemma~\ref{lmm:DexpOmegaPsi}, where $m=2$ and $D$ and $w$ are those introduced above. Let $P$ be the $n\times n$ permutation matrix that shifts elements $p+1$ and $p+2$ up to elements 3 and 4. Then 
\[
P \Omega P^\top = \begin{bmatrix} \Omega_4 & 0_{4\times(n-4)} \\ 0_{(n-4)\times4} & 0_{(n-4)\times(n-4)} \end{bmatrix} =: \Omega_P,
\]
i.e., $\Omega$ is the $\Omega_4$ matrix of Lemma~\ref{lmm:DexpOmegaPsi} after padding with zeros and permutation. Likewise, 
\[ 
P \breve\Omega P^\top = \begin{bmatrix} \breve\Omega_4 & 0_{4\times(n-4)} \\ 0_{(n-4)\times4} & 0_{(n-4)\times(n-4)} \end{bmatrix} =: \breve\Omega_P. 
\]
By Proposition~\ref{prp:conj-Stiefel-gen}, letting ``$*$'' stand for an irrelevant constant, we have
\begin{align*}
&\mathrm{D}\Exp_X(t^{\mathrm{r}}_\beta \xi;\breve{\xi})
\\ &= \left.\frac{\mathrm{d}}{\mathrm{d}\epsilon}\right|_{\epsilon=0} Q \exp_{\mathrm{m}}(t^{\mathrm{r}}_\beta \Omega + \epsilon \breve\Omega) I_{n\times p} \exp_{\mathrm{m}}(-t^{\mathrm{r}}_\beta\Psi - \epsilon \breve\Psi)
\\ &= \left.\frac{\mathrm{d}}{\mathrm{d}\epsilon}\right|_{\epsilon=0} Q P^\top \exp_{\mathrm{m}}(t^{\mathrm{r}}_\beta \Omega_P + \epsilon \breve\Omega_P) P I_{n\times p} \exp_{\mathrm{m}}(-t^{\mathrm{r}}_\beta\Psi - \epsilon \breve\Psi)
\\ &= \left.\frac{\mathrm{d}}{\mathrm{d}\epsilon}\right|_{\epsilon=0} Q P^\top 
\begin{bmatrix} \exp_{\mathrm{m}}(t^{\mathrm{r}}_\beta \Omega_4 + \epsilon \breve\Omega_4) & 0_{4\times(n-4)} \\ 0_{(n-4)\times4} & I_{n-4} \end{bmatrix}
\\ & \qquad \begin{bmatrix} I_{4\times2} & 0_{4\times(p-2)} \\ 0_{(n-4)\times2} & * \end{bmatrix} 
\begin{bmatrix} \exp_{\mathrm{m}}(-t^{\mathrm{r}}_\beta\Psi_2 - \epsilon \breve\Psi_2) & 0_{2\times(p-2)} \\ 0_{2\times(p-2)} & I_{p-2} \end{bmatrix}
\\ &= \left.\frac{\mathrm{d}}{\mathrm{d}\epsilon}\right|_{\epsilon=0} Q P^\top 
\begin{bmatrix} \exp_{\mathrm{m}}(t^{\mathrm{r}}_\beta \Omega_4 + \epsilon \breve\Omega_4) & 0_{4\times(n-4)} \\ 0_{(n-4)\times4} & I_{n-4} \end{bmatrix}
\\ & \qquad \begin{bmatrix} I_{4\times2} \exp_{\mathrm{m}}(-t^{\mathrm{r}}_\beta\Psi_2 - \epsilon \breve\Psi_2) & 0_{4\times(p-2)} \\ 0_{(n-4)\times2} & * \end{bmatrix}
\\ &= \left.\frac{\mathrm{d}}{\mathrm{d}\epsilon}\right|_{\epsilon=0} Q P^\top 
\begin{bmatrix} \exp_{\mathrm{m}}(t^{\mathrm{r}}_\beta \Omega_4 + \epsilon \breve\Omega_4) I_{4\times2} \exp_{\mathrm{m}}(-t^{\mathrm{r}}_\beta\Psi_2 - \epsilon \breve\Psi_2) & 0_{4\times(p-2)} \\ 0_{(n-4)\times2} & * \end{bmatrix}
\\ &= 0,
\end{align*}
where the last equality follows from Lemma~\ref{lmm:DexpOmegaPsi}.

\underline{Case~2}: Euclidean case, i.e., $\alpha = -\frac12$ or equivalently $\beta = 1$. Let $D\in\mathcal{S}_{\mathrm{skew}}(2)$ be nonzero and
\[
\breve\xi := Q \begin{bmatrix} \breve{A} \\ \breve{H} \end{bmatrix}, \quad \text{with }
\breve{A} := \begin{bmatrix} \frac{1}{2} D & 0_{2\times(p-2)} \\ 0_{(p-2)\times2} & 0_{(p-2)\times(p-2)} \end{bmatrix}, 
\breve{H} := 0_{(n-p)\times p}.
\]
A development akin to Case~1, invoking Proposition~\ref{prp:conj-Stiefel-gen} and this time Lemma~\ref{lmm:DexpOmegaPsi-Euc}, again yields $\mathrm{D}\Exp_X(t^{\mathrm{r}}_\beta \xi;\breve{\xi}) = 0$.

\underline{Case~3}: canonical case, i.e., $\alpha = 0$ or equivalently $\beta = \frac12$. Let $\breve\xi$, $\breve{A}$, and $\breve{H}$ be as in Case~1. Here again, a development akin to Case~1, invoking this time Proposition~\ref{prp:can-conj-Stiefel-gen} and Lemma~\ref{lmm:DexpOmega}, yields $\mathrm{D}\Exp_X(t^{\mathrm{r}}_\beta \xi;\breve{\xi}) = 0$. 

This completes the proof of the conjugacy of $\gamma(0)$ and $\gamma(t^{\mathrm{r}}_\beta)$. The conjugacy of $\gamma(0)$ and $\gamma(\pi)$ is proved similarly, with $w$ replaced by $0$ and $t^{\mathrm{r}}_\beta$ by $\pi$, and omitting the redundant Case~2.

Finally, for all $T>0$, the length of $\gamma$ between $t=0$ and $t=T$ is $T \|\xi\|_{Q I_{n\times p}}$ with $\|\xi\|_{Q I_{n\times p}}^2 = g_{Q I_{n\times p}}(\xi,\xi) = \mathrm{trace}(H^\top H) = \|H\|_{\mathrm{F}}^2 = 2$. The bound~\eqref{eq:conjugate} on the conjugate radius is then direct. Which of $t^{\mathrm{r}}_\beta$ and $\pi$ achieves the $\min$ follows from the sign analysis in the proof of Lemma~\ref{lmm:DexpOmegaPsi}.
\end{proof}

\section{Geodesic loops}
\label{sec:loops}

In view of~\eqref{eq:inj-ell-conj}, upper bounds on the length of the shortest nontrivial geodesic loops also yield upper bounds on the injectivity radius.

We continue to consider $\mathrm{St}_\beta(n,p)$, the Stiefel manifold~\eqref{eq:Stiefel} endowed with the metric $g$~\eqref{eq:g} parameterized by $\beta>0$. We distinguish the following (overlapping) cases.

\subsection{Case $p=1$} Regardless of $\beta$, the Stiefel manifold reduces to the unit sphere as a Riemannian submanifold of the Euclidean space $\mathbb{R}^n$. Every geodesic loop has length $2\pi$. Hence $\ell_{\mathrm{St}_\beta(n,1)}(X) = 2\pi$ for all $X$. 

\subsection{Case $p=n \geq 2$} The Stiefel manifold reduces to the orthogonal group $\mathrm{O}(n)$ endowed with the Frobenius metric scaled by $\beta$. The shortest geodesics correspond to Givens rotations,
and their length is $\sqrt{2\beta}\,2\pi$; 
the result readily follows from Theorem~\ref{thm:loop-can} below since, when $p=n$, the Frobenius metric scaled by $\beta$ is the canonical metric scaled by $2\beta$.
Hence $\ell_{\mathrm{St}_\beta(n,n)}(X) = \sqrt{2\beta}\,2\pi$ for all $X$. 

\subsection{Case $\beta=\frac12$ (canonical metric)} It is known that $\ell_{\mathrm{St}_{\frac12}(n,p)}(X) = 2\pi$ for all $X$~\cite[p.~94]{Ren2013}. We give a more detailed proof for completeness.
\begin{theorem}[Rentmeesters~\cite{Ren2013}]  \label{thm:loop-can}
    On $\mathrm{St}_\beta(n,p)$ with $\beta=\frac{1}{2}$, the shortest geodesic loops have length $2\pi$.
\end{theorem}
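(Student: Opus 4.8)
The plan is to compute the length of the geodesic loops generated by Givens rotations and show that (i) such loops exist with length exactly $2\pi$ and (ii) no nontrivial geodesic loop is shorter. For the canonical metric, the geodesic formula \eqref{eq:can-geod-Stiefel} gives $\gamma(t) = Q \exp_{\mathrm{m}}\!\left(t\Omega\right) I_{n\times p}$ with $\Omega = \left[\begin{smallmatrix} A & -H^\top \\ H & 0 \end{smallmatrix}\right]$. First I would exhibit an explicit loop: take $A = 0$ and $H$ a single nonzero entry (a Givens-type generator), say $H = c\, e_1 e_1^\top$ with $c>0$, placing the rotation in a $2\times 2$ block mixing the first frame direction and one orthogonal direction. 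By \eqref{eq:expstO}-type block computation, $\exp_{\mathrm{m}}(t\Omega)$ acts as a planar rotation of angle $ct$ in that plane and as the identity elsewhere, so $\gamma(t) = \gamma(0)$ precisely when $ct = 2\pi$. The length is $T\|\xi\|$ with $\|\xi\|^2 = g(\xi,\xi) = \operatorname{trace}(H^\top H) = c^2$ (the $A$-term vanishes), giving length $(2\pi/c)\cdot c = 2\pi$. This establishes the existence of a geodesic loop of length $2\pi$.

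The harder direction is the lower bound: I must argue that every nontrivial geodesic loop has length at least $2\pi$. The natural route is the Riemannian submersion $\varphi:\mathrm{O}(n)\to\mathrm{St}(n,p)$ of \eqref{eq:can-varphi}. A geodesic loop $\gamma$ at $X = QI_{n\times p}$ lifts to a horizontal geodesic $\bar\gamma(t) = Q\exp_{\mathrm{m}}(t\Omega)$ in $\mathrm{O}(n)$ with $\Omega\in\mathfrak{m}$, and since $\varphi$ is a Riemannian submersion the length of $\gamma$ equals the length of $\bar\gamma$ with respect to $\overline{g}$ of \eqref{eq:can-overline-g}. The loop condition $\gamma(T) = \gamma(0)$ means $\exp_{\mathrm{m}}(T\Omega)I_{n\times p} = I_{n\times p}$, i.e.\ $\exp_{\mathrm{m}}(T\Omega)$ fixes the first $p$ columns, equivalently $\exp_{\mathrm{m}}(T\Omega) = \left[\begin{smallmatrix} I_p & 0 \\ 0 & R \end{smallmatrix}\right]$ for some $R\in\mathrm{O}(n-p)$. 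Thus the loop length equals $T\sqrt{\tfrac12\operatorname{trace}(\Omega^\top\Omega)} = \tfrac{T}{\sqrt2}\|\Omega\|_{\mathrm F}$, and the problem reduces to minimizing this over all nonzero $\Omega\in\mathfrak m$ whose time-$T$ flow lands in the stabilizer $\left[\begin{smallmatrix} I_p & 0 \\ 0 & R\end{smallmatrix}\right]$.

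The key step is then an eigenvalue/spectral analysis of $\Omega\in\mathcal{S}_{\mathrm{skew}}(n)$. A skew-symmetric matrix has purely imaginary eigenvalues $\{\pm i\theta_1,\dots\}$, and $\exp_{\mathrm{m}}(T\Omega)$ equals a block matrix of the stated form only if the rotation in each $2\times 2$ eigenplane that involves one of the first $p$ coordinates returns to the identity, forcing $T\theta_j \in 2\pi\mathbb Z$ for each such active angle. The length $\tfrac{T}{\sqrt2}\|\Omega\|_{\mathrm F} = T\sqrt{\sum_j \theta_j^2}$ is therefore bounded below by $T\theta_{\min}$ over the active frequencies, and the loop constraint $T\theta_{\min}\ge 2\pi$ yields length $\ge 2\pi$; equality requires a single active plane with $T\theta = 2\pi$, recovering the Givens rotation. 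I expect the main obstacle to be making the ``active eigenplane'' bookkeeping rigorous: one must verify that a nontrivial loop necessarily has at least one eigenplane coupling a first-$p$ coordinate to the rest (otherwise $\Omega$ would be block-diagonal and the curve would either be constant or stay in the fiber, contradicting nontriviality in $\mathrm{St}(n,p)$), and that the horizontality constraint $\Omega_{22}=0$ built into $\mathfrak m$ does not permit a cheaper configuration. Carefully ruling out loops whose lift closes up in the fiber $\left[\begin{smallmatrix} I & 0 \\ 0 & R\end{smallmatrix}\right]$ while projecting to a shorter closed curve downstairs is where the argument needs the most care.
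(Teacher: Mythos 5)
Your proposal is correct and follows essentially the same route as the paper: exhibit a Givens-type loop of length $2\pi$, lift a general loop to the horizontal geodesic $t\mapsto Q\exp_{\mathrm{m}}(t\Omega)$, and use the spectral decomposition of the skew-symmetric generator together with the condition $\exp_{\mathrm{m}}(T\Omega)e_\ell=e_\ell$ to force a nonzero eigenvalue angle in $2\pi\mathbb{Z}$, whence $\frac{T}{\sqrt2}\|\Omega\|_{\mathrm F}\geq 2\pi$. The ``active eigenplane'' bookkeeping you flag as delicate is handled in the paper exactly as you sketch, via the nonvanishing of the initial velocity $\Omega I_{n\times p}\neq 0$ and the set of eigencomponents of $e_\ell$ with nonzero coefficient.
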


\begin{proof}
Let $[0,1]\ni t\mapsto \Exp_{I_{n\times p}}\left(t \left[\begin{smallmatrix}A\\H\end{smallmatrix}\right] \right) = \exp_{\mathrm{m}} \left( t \left[\begin{smallmatrix} A & -H^\top \\ H & 0\end{smallmatrix}\right] \right) I_{n\times p}$ be a nontrivial geodesic loop, where the equality follows from~\eqref{eq:can-geod-Stiefel}. Since it is nontrivial, the initial velocity is nonzero, i.e., $\left[\begin{smallmatrix} A & -H^\top \\ H & 0\end{smallmatrix}\right] I_{n\times p} \neq 0$, and thus there is $\ell \in \{1,\dots,p\}$ such that $\left[\begin{smallmatrix} A & -H^\top \\ H & 0\end{smallmatrix}\right] e_\ell \neq 0$; and since it is a loop, it holds that
$I_{n\times p} = \exp_{\mathrm{m}} \left[\begin{smallmatrix} A & -H^\top \\ H & 0\end{smallmatrix}\right] I_{n\times p}$, and thus $e_\ell = \exp_{\mathrm{m}} \left[\begin{smallmatrix} A & -H^\top \\ H & 0\end{smallmatrix}\right] e_\ell$. 
Let $\left[\begin{smallmatrix} A & -H^\top \\ H & 0\end{smallmatrix}\right] = S \Lambda S^*$ be an eigenvalue decomposition. The matrix is skew-symmetric, hence $\Lambda = \mathrm{diag}(i\theta_1,\dots,i\theta_n)$ where the nonzero $i\theta$'s appear in complex conjugate pairs. Let $\mathcal{K}$ be the set of indices of the nonzero components of $S^* e_\ell$. 
We have $S \Lambda S^* e_\ell \neq 0$, 
thus there is $k\in\mathcal{K}$ such that $\theta_k\neq0$; and 
we have $e_\ell = \exp_\mathrm{m}(S \Lambda S^*) e_\ell$, hence $S^* e_\ell = \exp_{\mathrm{m}}(\Lambda) S^* e_\ell$, thus $e^{i\theta_k}=1$ for all $k\in\mathcal{K}$. 
Consequently, the length of the geodesic loop (in the canonical metric) is
$\frac{\sqrt{2}}{2}\left\|\left[\begin{smallmatrix}
    A&-H^T\\
    H&0
\end{smallmatrix}\right]\right\|_{\mathrm{F}}
= \frac{\sqrt{2}}{2}\sqrt{\sum_{j=1}^n \theta_k^2}
\geq \frac{\sqrt{2}}{2}\sqrt{(2\pi)^2 + (-2\pi)^2} = 2\pi$. Finally, since there exist geodesic loops of length $2\pi$, it follows that $2\pi$ is the length of the shortest geodesic loops. Note that the proof still holds when $p=n$: $H$ disappears and $\left[\begin{smallmatrix} A & -H^\top \\ H & 0\end{smallmatrix}\right]$ reduces to $A$.
\end{proof}

\subsection{Case $2 \leq p \leq n-1$ and $\beta\neq\frac12$ (noncanonical metrics)} To our knowledge, the shortest geodesic loops are unknown, except for the case $\beta = 1$ where these loops have length $2\pi$, as shown in the recent eprint~\cite{zimmermann2024injectivity}.
However, some geodesic loops are known, yielding an upper bound. Let $E_{i,j}$ denote the matrix of size $n\times p$ with $1$ at position $(i,j)$ and zeros elsewhere. In view of~\eqref{eq:geod-Stiefel}, the geodesic at $I_{n\times p}$ along $\xi = 2\pi (E_{2,1} - E_{1,2})$ makes a loop in unit time, and its length according to~\eqref{eq:g} is $\sqrt{8 \beta \pi^2} = \sqrt{2\beta} \, 2\pi$. Likewise, the geodesic at $I_{n\times p}$ along $\xi = 2\pi E_{n,1}$ makes a loop in unit time, and its length according to~\eqref{eq:g} is $2\pi$. Hence 
\begin{equation}  \label{eq:ell}
\ell_{\mathrm{St}_\beta(n,p)}(X) \leq \min\{\sqrt{2\beta},1\} 2\pi.    
\end{equation}

\section{Upper bounds on the injectivity radius}
\label{sec:inj-upper}

We can now exploit the knowledge on $\mathrm{conj}_{\mathrm{St}_\beta(n,p)}$ and $\ell_{\mathrm{St}_\beta(n,p)}$ gathered respectively in section~\ref{sec:conjugate} and~\ref{sec:loops} in order to produce an upper bound on the injectivity radius of the Stiefel manifold $\mathrm{St}_\beta(n,p)$.

\subsection{Case $p=1$ or $p\geq n-1$}
\label{sec:extreme-p}

As already mentioned, when $p=1$, regardless of $\beta$, the Stiefel manifold reduces to the unit sphere as a Riemannian submanifold of the Euclidean space $\mathbb{R}^n$. The injectivity radius is $\pi$; see, e.g.,~\cite[\S 1.6]{CE75}.  

When $p=n$, the Stiefel manifold reduces to the orthogonal group $\mathrm{O}(n)$ endowed with the Frobenius metric scaled by $\beta$. The injectivity radius is $\sqrt{2\beta}\pi$; this can be deduced from~\cite[Corollary~2.1]{Mataigne2023}. 

When $p=n-1$ and $\beta=\frac12$ (canonical metric), since the fibers of $\varphi$~\eqref{eq:can-varphi} in $\mathrm{SO}(n)$ are singletons, the injectivity radius of the Stiefel manifold is the same as the injectivity radius of $\mathrm{O}(n)$, namely $\pi$.

Finally, when $p=n-1$ and $\beta\neq\frac12$ (noncanonical metrics) with $p\geq2$ (the case $p=1$ has already been handled above), in view of~\eqref{eq:ell} and~\eqref{eq:inj-ell-conj}, we have
\begin{equation}  \label{eq:inj-St-n-1}
\mathrm{inj}_{\mathrm{St}(n,n-1)} \leq \min\{\sqrt{2\beta},1\} \pi.    
\end{equation}

\subsection{Case $2 \leq p \leq n-2$}

\begin{theorem}  \label{thm:inj}
    Let $2 \leq p \leq n-2$ and consider  $\mathrm{St}_\beta(n,p)$, the Stiefel manifold~\eqref{eq:Stiefel} endowed with the Riemannian metric $g$~\eqref{eq:g} parameterized by $\beta>0$.
Let $t^{\mathrm{r}}_\beta$ denote the smallest positive solution of~\eqref{eq:talpha}, namely, in view of~\eqref{eq:alpha-beta},
    \[
        t^{\mathrm{r}}_\beta 
        = \min \left\{t>0 \mid \frac{\sin t}{t} + \frac{1-\beta}{\beta} \cos t = 0 \right\}.   
    \]
Then 
\begin{equation}  \label{eq:inj}
\mathrm{inj}_{\mathrm{St}_\beta(n,p)} \leq \min\{ \sqrt{2\beta}\,\pi, \pi, t^{\mathrm{r}}_\beta\sqrt2 \} \quad =: \hat{\imath}_\beta.    
\end{equation}
\end{theorem}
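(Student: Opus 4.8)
The plan is to assemble Theorem~\ref{thm:inj} directly from the two upper bounds already in hand, via the Klingenberg-type identity~\eqref{eq:inj-ell-conj}. First I would invoke the observation following~\eqref{eq:geod-Stiefel} that the injectivity radius is the same at every point of $\mathrm{St}_\beta(n,p)$, so it suffices to work at a single point $X$ and set $\mathrm{inj}_{\mathrm{St}_\beta(n,p)} = \mathrm{inj}_{\mathrm{St}_\beta(n,p)}(X)$. Applying~\eqref{eq:inj-ell-conj} then gives
\[
\mathrm{inj}_{\mathrm{St}_\beta(n,p)}(X) = \min\left\{\tfrac12 \ell_{\mathrm{St}_\beta(n,p)}(X),\ \mathrm{conj}_{\mathrm{St}_\beta(n,p)}(X)\right\}.
\]

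Next I would substitute the two estimates. The loop bound~\eqref{eq:ell} yields $\tfrac12 \ell_{\mathrm{St}_\beta(n,p)}(X) \leq \min\{\sqrt{2\beta},1\}\,\pi$, and the conjugate-point bound~\eqref{eq:conjugate} from Theorem~\ref{thm:conjugate} yields $\mathrm{conj}_{\mathrm{St}_\beta(n,p)}(X) \leq \min\{t^{\mathrm{r}}_\beta,\pi\}\sqrt2$. Using the elementary monotonicity $\min\{a,b\} \leq \min\{a',b'\}$ whenever $a \leq a'$ and $b \leq b'$, I obtain
\[
\mathrm{inj}_{\mathrm{St}_\beta(n,p)}(X) \leq \min\left\{\sqrt{2\beta}\,\pi,\ \pi,\ t^{\mathrm{r}}_\beta\sqrt2,\ \pi\sqrt2\right\}.
\]

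Finally I would discard the redundant term: since $\pi\sqrt2 > \pi$, the fourth entry never realizes the minimum and can be dropped, leaving exactly $\hat{\imath}_\beta = \min\{\sqrt{2\beta}\,\pi,\ \pi,\ t^{\mathrm{r}}_\beta\sqrt2\}$, as claimed. I do not expect a genuine obstacle in this theorem itself, as the substantive work already lives in Theorem~\ref{thm:conjugate} (the construction of conjugate points and the length computation $\|\xi\|^2 = 2$) and in the loop estimates of section~\ref{sec:loops}. The only point requiring a moment's care is that the $\pi\sqrt2$ contribution arising from the $\pi$ branch of the conjugate radius is superseded by the $\pi$ contribution from the Givens-rotation loop $\xi = 2\pi E_{n,1}$; recognizing this collapse of the four-term minimum to the stated three-term expression is what makes the bound tight-looking and matches the conjectured value $\hat{\imath}_\beta$.
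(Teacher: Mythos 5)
Your proposal is correct and follows exactly the paper's route: the paper's proof is the one-liner ``Combine~\eqref{eq:inj-ell-conj} with~\eqref{eq:conjugate} and~\eqref{eq:ell},'' which is precisely your assembly of the Klingenberg identity with the loop and conjugate-point bounds. Your additional remarks on homogeneity and on discarding the redundant $\pi\sqrt2$ term are accurate bookkeeping that the paper leaves implicit.
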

\begin{proof}
Combine~\eqref{eq:inj-ell-conj} with~\eqref{eq:conjugate} and~\eqref{eq:ell}.
\end{proof}

Let $\beta_1 \approx 0.34689870829737$ denote the smallest root of $\frac{\sin(\sqrt\beta \pi)}{\sqrt\beta \pi} + \frac{1-\beta}{\beta} \cos(\sqrt\beta \pi)$ and let 
$\beta_2 = \left(1-\frac{\sqrt2}{\pi} \tan\frac{\pi}{\sqrt2}\right)^{-1} \approx 0.62839259859361$. An analysis of $\hat{\imath}_\beta$---see Figure~\ref{fig:beta-rho} for the graph of $\hat{\imath}_\beta$ and the end of the proof of Lemma~\ref{lmm:DexpOmegaPsi} for key ingredients of an analysis involving the implicit function theorem on~\eqref{eq:F-beta-t}---shows that it is a monotonically nondecreasing function of $\beta$ on its domain $\beta>0$, smooth everywhere except at $\beta_1$ and $\beta_2$, and
\begin{equation}  \label{eq:inj-beta}
    \hat{\imath}_\beta = \begin{cases} 
        \sqrt{2\beta}\,\pi & \text{if $\beta\leq\beta_1$},
        \\ t^{\mathrm{r}}_\beta\sqrt2 & \text{if $\beta_1\leq\beta\leq\beta_2$},
        \\ \pi & \text{if $\beta_2 \leq \beta$}.
    \end{cases}
\end{equation}

\subsection{Resulting bounds for the canonical and Euclidean metrics}

For the canonical and Euclidean metrics, section~\ref{sec:extreme-p} and Theorem~\ref{thm:inj} yield the following bounds on the injectivity radius of the Stiefel manifold.
\begin{corollary}  \label{cor:can-Euc}
    For the Stiefel manifold~\eqref{eq:Stiefel} with $2 \leq p \leq n-2$:
    \begin{enumerate}
        \item In the canonical metric (namely \eqref{eq:g} with $\beta = \frac12$, i.e., $\alpha = 0$), we have
\[
\mathrm{inj}_{\mathrm{St}(n,p)} \leq t^{\mathrm{r}}\sqrt2 \approx 0.91326189159122 \, \pi,    
\]
where $t^{\mathrm{r}}$ is the smallest positive solution of $\frac{\sin t}{t} + \cos t = 0$.
        \item In the Euclidean metric (namely \eqref{eq:g} with $\beta = 1$, i.e., $\alpha = -\frac12$), we have
        \[
        \mathrm{inj}_{\mathrm{St}(n,p)} \leq \pi.    
        \]
    \end{enumerate}
    For $p=n-1$, we have  $\mathrm{inj}_{\mathrm{St}(n,n-1)} = \pi$ in the canonical metric and $\mathrm{inj}_{\mathrm{St}(n,n-1)} \leq \pi$ in the Euclidean metric. For $p=1$ (with $n\geq2$), $\mathrm{inj}_{\mathrm{St}(n,1)} = \pi$. 
\end{corollary}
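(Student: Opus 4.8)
The plan is to prove Corollary~\ref{cor:can-Euc} as an immediate specialization of the general results already established, feeding the specific values $\beta=\frac12$ and $\beta=1$ into Theorem~\ref{thm:inj} and the case analysis of section~\ref{sec:extreme-p}. For the range $2\leq p\leq n-2$, everything reduces to evaluating the three-way minimum $\hat{\imath}_\beta = \min\{\sqrt{2\beta}\,\pi,\ \pi,\ t^{\mathrm r}_\beta\sqrt2\}$ from~\eqref{eq:inj} at the two metric values. For the Euclidean metric $\beta=1$: here $\sqrt{2\beta}\,\pi = \sqrt2\,\pi > \pi$, and by the last sentence of Lemma~\ref{lmm:DexpOmegaPsi}, when $-1<\alpha<-\frac12$ (i.e.\ $\beta>1$) the relevant smallest root is $\pi$; at the boundary $\beta=1$ (i.e.\ $\alpha=-\frac12$) Lemma~\ref{lmm:DexpOmegaPsi-Euc} shows the smallest positive solution of the conjugacy equation is exactly $\pi$, so $t^{\mathrm r}_1 = \pi$ and $t^{\mathrm r}_1\sqrt2 = \sqrt2\,\pi > \pi$. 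Thus the minimum is achieved by the middle term $\pi$, giving $\mathrm{inj}_{\mathrm{St}(n,p)}\leq\pi$.

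For the canonical metric $\beta=\frac12$ (so $\alpha=0$): the loop term gives $\sqrt{2\beta}\,\pi = \pi$, and $t^{\mathrm r}_{1/2}$ is the smallest positive root of $\frac{\sin t}{t}+\cos t=0$, which is exactly the value $t^{\mathrm r}$ named in the statement. I would verify numerically (or cite the numerical value already recorded in the excerpt) that $t^{\mathrm r}\sqrt2 \approx 0.91326189159122\,\pi < \pi$, so the conjugate-radius term $t^{\mathrm r}_\beta\sqrt2$ achieves the minimum. Hence $\mathrm{inj}_{\mathrm{St}(n,p)}\leq t^{\mathrm r}\sqrt2$. This matches the piecewise description~\eqref{eq:inj-beta}, since $\beta_1\approx0.347 < \frac12 < \beta_2\approx0.628$ places $\beta=\frac12$ in the middle regime where $\hat{\imath}_\beta = t^{\mathrm r}_\beta\sqrt2$.

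For the boundary cases I would invoke section~\ref{sec:extreme-p} directly. When $p=n-1$ and $\beta=\frac12$, the fibers of the Riemannian submersion $\varphi$ in~\eqref{eq:can-varphi} are singletons, so the injectivity radius equals that of $\mathrm{O}(n)$, which is $\pi$ (giving the equality $\mathrm{inj}_{\mathrm{St}(n,n-1)}=\pi$); when $p=n-1$ and $\beta=1$, the bound~\eqref{eq:inj-St-n-1} with $\min\{\sqrt{2\beta},1\}=1$ yields $\mathrm{inj}_{\mathrm{St}(n,n-1)}\leq\pi$. The case $p=1$ (with $n\geq2$) is the unit sphere, whose injectivity radius is $\pi$ as recalled at the start of section~\ref{sec:extreme-p}.

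There is essentially no genuine obstacle here, since Corollary~\ref{cor:can-Euc} is a bookkeeping consequence of Theorem~\ref{thm:inj} and the preparatory lemmas. The only point requiring care is the claim that $t^{\mathrm r}\sqrt2<\pi$ for the canonical metric: this is what makes the conjugate-point bound strictly sharper than the loop bound, and it rests on locating the smallest root of $\frac{\sin t}{t}+\cos t$ and comparing $t^{\mathrm r}\sqrt2$ against $\pi$. I expect this single numerical comparison to be the one substantive verification; everything else is a direct substitution into already-proved inequalities.
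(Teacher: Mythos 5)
Your proposal is correct and follows essentially the same route as the paper: substitute $\beta=\tfrac12$ and $\beta=1$ into the three-way minimum of Theorem~\ref{thm:inj} (noting $t^{\mathrm r}_{1}=\pi$ and $t^{\mathrm r}_{1/2}\sqrt2<\pi$), and dispatch the cases $p=n-1$ and $p=1$ by citing section~\ref{sec:extreme-p} and~\eqref{eq:inj-St-n-1}. The paper's own proof is just a terser version of the same bookkeeping.
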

\begin{proof}
For the canonical case with $2 \leq p \leq n-2$, it is readily seen that the term $t^{\mathrm{r}}_\beta\sqrt2$ achieves the minimum in~\eqref{eq:inj}. For the Euclidean case with $2 \leq p \leq n-2$, one obtains $t^{\mathrm{r}}_\beta = \pi$, and the result follows from~\eqref{eq:inj}. The case $p=n-1$ with $p \geq 2$ in the Euclidean metric follows from~\eqref{eq:inj-St-n-1}. The case $p=n-1$ in the canonical metric and the case $p=1$ were mentioned in section~\ref{sec:extreme-p}.
\end{proof}

\section{Numerical investigation}
\label{sec:numerical}

This section presents numerical experiments \linebreak based on an algorithm (Algorithm~\ref{alg:semi}) that takes as input $\rho>0$ and has the following properties. 
If $\rho \leq \mathrm{inj}_{\mathrm{St}_\beta(n,p)}$, then it does not return, i.e., it loops forever.
If $\rho > \mathrm{inj}_{\mathrm{St}_\beta(n,p)}$ and $\beta\leq\frac12$ (resp.\ $\beta>\frac12$), then it is known (resp.\ suspected) to return with probability 1; however, it returns after an amount of time that is expected to grow as the dimensions increase and as $\rho$ gets close to $\mathrm{inj}_{\mathrm{St}_\beta(n,p)}$. Experiments in low dimensions support the conjecture that the upper bounds on the injectivity radius obtained in section~\ref{sec:inj-upper} are equalities.

Let $\mathcal{G}$ denote the domain of $\varphi$
and $\mathcal{H}$ the stabilizer of $I_{n\times p}$ introduced in Section~\ref{sec:Stiefel}, where the orthogonal group is replaced by the special orthogonal group without affecting the results. Hence, in the noncanonical case (section~\ref{sec:noncanonical}, $\beta\neq\frac12$), $\mathcal{G}$ is the Lie group $\mathrm{SO}(n) \times \mathrm{SO}(p)$ endowed with the (pseudo-)Riemannian metric~\eqref{eq:overline-g} and $\mathcal{H} = \mathrm{SO}(n-p) \times \mathrm{SO}(p)$; and in the canonical case (section~\ref{sec:canonical}, $\beta = \frac12$), $\mathcal{G}$ is the Lie group $\mathrm{SO}(n)$ endowed with the Riemannian metric~\eqref{eq:can-overline-g} and $\mathcal{H} = \mathrm{SO}(n-p)$.
Let $E$ denote the identity element of $\mathcal{G}$ (i.e., $E = (I_n,I_p)$ when $\mathcal{G} = \mathrm{SO}(n) \times \mathrm{SO}(p)$ and $E = I_n$ when $\mathcal{G} = \mathrm{SO}(n)$) and let ``$\cdot$'' denote the group operation. When $\mathcal{G}$ is Riemannian ($\beta \leq \frac12$), $d_{\mathcal{G}}$ denotes the distance on $\mathcal{G}$.

In line~\ref{alg:semi:G} of Algorithm~\ref{alg:semi}, we draw $G$ by drawing $H$ from a continuous distribution on $\mathcal{H}$ and letting $G \gets \mathrm{Exp}_E(\rho\overline{\xi}_E) \cdot H$.

In line~\ref{alg:semi:Xi} of Algorithm~\ref{alg:semi}, for the noncanonical case ($\beta\neq\frac12$), by a slight abuse of notation, given $G = (Q,V) \in \mathcal{G}$, we let $\log_{\mathrm{m}}(Q,V)$ stand for $\{(\Omega, \Psi) \mid \Omega \in \log_{\mathrm{m}}(Q), \Psi \in \log_{\mathrm{m}}(V)\} \subset \mathrm{T}_E\mathcal{G}$, where $\log_{\mathrm{m}}(Q) = \argmin_{\Omega \in \exp_{\mathrm{m}}^{-1}(Q)}\|\Omega\|_{\mathrm{F}}$. Note that $\log_{\mathrm{m}}(Q)$ is a singleton, called the principal matrix logarithm of $Q$, if and only if $Q$ has no negative real eigenvalue.\footnote{
    In fact, systematically returning a skew-symmetric matrix logarithm entails some subtlety. Details can be found in the publicly available code (\url{https://github.com/smataigne/InjectivityStiefel.jl}); see also \texttt{SkewLinearAlgebra.jl} (\url{https://juliapackages.com/p/skewlinearalgebra}) and~\cite{mataigne2024eigenvalue}.
    } 
Exactly negative real eigenvalues have not been observed in our experiments. Note that, when $\mathcal{G}$ is Riemannian ($\beta \leq \frac12$), we have $\overline{g}_E(\Xi,\Xi) = d_{\mathcal{G}}^2(E,G)$.

In line~\ref{alg:semi:LXi} of Algorithm~\ref{alg:semi}, observe that $[0,1] \ni t \mapsto \varphi(\Exp_E(t\Xi))$ is a curve from $I_{n\times p}$ to $U$. Its length is computed in closed form as follows. In the noncanonical case, where $\mathcal{G} = \mathrm{SO}(n) \times \mathrm{SO}(p)$, one obtains 
\begin{equation}  \label{eq:L-noncanonical}
    L_\Xi^2 = \beta \|\Omega_{11}-\Psi\|_{\mathrm{F}}^2 + \|\Omega_{21}\|_{\mathrm{F}}^2,
\end{equation}     
where $\Xi = (\Omega,\Psi)$ and $\Omega = \left[ \begin{smallmatrix} \Omega_{11} & \Omega_{12} \\ \Omega_{21} & \Omega_{22} \end{smallmatrix} \right]$ with $\Omega_{11}$ of size $p\times p$. In the canonical case, where $\mathcal{G} = \mathrm{SO}(n)$, one obtains 
\[
    L_\Xi^2 = \frac12 \|\Xi_{11}\|_{\mathrm{F}}^2 + \|\Xi_{21}\|_{\mathrm{F}}^2,
\]
where $\Xi = \left[ \begin{smallmatrix} \Xi_{11} & \Xi_{12} \\ \Xi_{21} & \Xi_{22} \end{smallmatrix} \right]$
with $\Xi_{11}$ of size $p\times p$.  

Assume that the total space $\mathcal{G}$ is Riemannian ($\alpha \geq 0$, i.e., $\beta \leq \frac12$).
Then, in view of~\cite[eq.~(5.6)]{GuiguiMiolanePennec2023}, the condition in line~\ref{alg:semi:if} of Algorithm~\ref{alg:semi} can be replaced by $d_{\mathcal{G}}(E,G) < \rho$. However, the discussion above indicates that computing $L_\Xi$ is not more expensive than computing $d_{\mathcal{G}}(E,G)$. Moreover, $L_\Xi \leq d_{\mathcal{G}}(E,G)$, hence the replacement can only increase the average time needed by Algorithm~\ref{alg:semi} to return.

Line~\ref{alg:semi:optional} offers an opportunity to design algorithm instances that return in less time on average. We have not used this opportunity in the experiments reported here. An obstacle to making efficient use of line~\ref{alg:semi:optional} is that, whenever $\xi$ is such that $\rho < t_{\mathrm{c}}(\xi)$, there is no $G$ in $\varphi^{-1}(U)$ that yields $L_\Xi < \rho$, hence the optional search is in vain.

\begin{algorithm}
    \caption{Certificate that $\rho > \mathrm{inj}_{\mathrm{St}_\beta(n,p)}$} 
    \label{alg:semi}
    \begin{algorithmic}[1]
        \Require $n > p > 0$; $\beta>0$
        \Input $\rho>0$  
        \Loop
            \State Draw $\xi$ from a continuous distribution on the unit tangent space $\mathrm{U}_{I_{n\times p}}\mathrm{St}(n,p)$;\label{alg:semi:xi} 
            \State $U \gets \Exp_{I_{n\times p}}(\rho\,\xi)$ as in~\eqref{eq:geod-Stiefel};\label{alg:semi:U} 
            \State Draw $G$ from a continuous distribution on $\varphi^{-1}(U)$;\label{alg:semi:G}
            \State Choose $\Xi\in\mathrm{T}_{E}\mathcal{G}$ in $\log_{\mathrm{m}}(G)$.\label{alg:semi:Xi}  
            \State $L_\Xi \gets$ length of $[0,1] \ni t \mapsto \varphi(\Exp_E(t\Xi))$.\label{alg:semi:LXi} 
            \If{$L_\Xi < \rho$}\label{alg:semi:if}
                \State \textsc{Return} 
            \EndIf \label{alg:semi:endif}
            \State  (Optional) Run a few steps of a descent algorithm, initialized at $G$, to search for a smaller $L_\Xi$ and \textsc{Return} if the obtained $L_\Xi < \rho$.\label{alg:semi:optional}
        \EndLoop
    \end{algorithmic}
\end{algorithm}

When Algorithm~\ref{alg:semi} returns, it is because it has found a $\xi$ in the unit tangent space $\mathrm{U}_{I_{n\times p}}\mathrm{St}_\beta(n,p)$ together with a curve $[0,1] \ni t \mapsto \varphi(\Exp_E(t\Xi))$ between $I_{n\times p}$ and $\Exp_{I_{n\times p}}(\rho\xi)$ of length $L_\Xi$ strictly smaller than $\rho$. Hence, when it returns, Algorithm~\ref{alg:semi} provides a certificate (up to floating point errors and inaccuracies in the computation of matrix logarithms) that $\rho > \mathrm{inj}_{\mathrm{St}_\beta(n,p)}$. 

Furthermore, if $\rho > \mathrm{inj}_{\mathrm{St}_\beta(n,p)}$ and the total space $\mathcal{G}$ is Riemannian ($\alpha \geq 0$, i.e., $\beta \leq \frac12$), then Algorithm~\ref{alg:semi} returns with probability 1. It is a consequence of the following proposition, letting $\overline{\mathcal{M}} = \mathcal{G}$ and recalling that $L_\Xi \leq d_{\mathcal{G}}(E,G)$. 

\begin{proposition}  \label{prp:probability-1}
Let $\overline{\mathcal{M}}$ be a complete Riemannian manifold, $\mathcal{H}$ be a compact Lie group acting smoothly on $\overline{\mathcal{M}}$, and assume that $\mathcal{M} := \overline{\mathcal{M}}/\mathcal{H}$ admits a Riemannian metric such that the canonical projection $\varphi: \overline{\mathcal{M}} \to \mathcal{M}$ is a Riemannian submersion. Let $E\in\overline{\mathcal{M}}$ and $\rho > \mathrm{inj}_{\mathcal{M}}(\varphi(E))$. Then there exists an open set $\mathcal{U}_\xi$ in $\mathrm{U}_{\varphi(E)}\mathcal{M}$ and an open set $\mathcal{U}_H$ in $\mathcal{H}$ such that, for all $\tilde\xi \in \mathcal{U}_\xi$ and all $\tilde{H} \in \mathcal{U}_H$, it holds that $d_{\overline{\mathcal{M}}}(E, \mathrm{Exp}_E(\rho \overline{\tilde\xi}_E) \tilde{H}) < \rho$. 
\end{proposition}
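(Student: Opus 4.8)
The plan is to first exhibit a single pair $(\xi_0,H_0)$ for which the strict inequality holds, and then to upgrade this to open neighborhoods by a continuity argument. Write $x:=\varphi(E)$. Since $\rho>\mathrm{inj}_{\mathcal{M}}(x)=\inf\{t_{\mathrm{c}}(\xi)\mid\xi\in\mathrm{U}_x\mathcal{M}\}$, there is a unit vector $\xi_0\in\mathrm{U}_x\mathcal{M}$ with $t_{\mathrm{c}}(\xi_0)<\rho$. By the very definition of the cut time, the geodesic $\gamma_{\xi_0}$ fails to be minimal past $t_{\mathrm{c}}(\xi_0)$, so that $d_{\mathcal{M}}(x,\Exp_x(\rho\xi_0))<\rho$. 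Set $U:=\Exp_x(\rho\xi_0)$.

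Next I would transfer this to the total space using the submersion structure. Because $\overline{\xi_0}_E$ is horizontal, the horizontal geodesic $t\mapsto\Exp_E(t\rho\,\overline{\xi_0}_E)$ projects under $\varphi$ to $t\mapsto\Exp_x(t\rho\,\xi_0)$; hence $\varphi(\Exp_E(\rho\,\overline{\xi_0}_E))=U$, i.e.\ $\Exp_E(\rho\,\overline{\xi_0}_E)$ lies in the fiber $\varphi^{-1}(U)$, which (since $\mathcal{M}=\overline{\mathcal{M}}/\mathcal{H}$) is precisely the $\mathcal{H}$-orbit $\{\Exp_E(\rho\,\overline{\xi_0}_E)\cdot H\mid H\in\mathcal{H}\}$. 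The key identity is
\[
    d_{\overline{\mathcal{M}}}\bigl(E,\varphi^{-1}(U)\bigr)=d_{\mathcal{M}}(x,U).
\]
The inequality ``$\geq$'' is immediate from $\varphi$ being $1$-Lipschitz as a Riemannian submersion. For ``$\leq$'', one horizontally lifts a minimizing geodesic of $\mathcal{M}$ from $x$ to $U$ (which exists because $\mathcal{M}$, as the quotient of the complete $\overline{\mathcal{M}}$ by the compact $\mathcal{H}$, is itself complete), starting at $E$; such a lift exists for all parameter values by completeness of $\overline{\mathcal{M}}$, it is again a geodesic of the same length, and its endpoint lies in $\varphi^{-1}(U)$. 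This is exactly~\cite[eq.~(5.6)]{GuiguiMiolanePennec2023}.

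Since $\mathcal{H}$ is compact and $H\mapsto d_{\overline{\mathcal{M}}}(E,\Exp_E(\rho\,\overline{\xi_0}_E)\cdot H)$ is continuous, its infimum---equal to $d_{\overline{\mathcal{M}}}(E,\varphi^{-1}(U))=d_{\mathcal{M}}(x,U)<\rho$---is attained at some $H_0\in\mathcal{H}$, giving $d_{\overline{\mathcal{M}}}(E,\Exp_E(\rho\,\overline{\xi_0}_E)\cdot H_0)<\rho$. Finally, the map $(\tilde\xi,\tilde H)\mapsto d_{\overline{\mathcal{M}}}(E,\Exp_E(\rho\,\overline{\tilde\xi}_E)\cdot\tilde H)$ is continuous, being the composition of the (linear) horizontal lift $\tilde\xi\mapsto\overline{\tilde\xi}_E$, the smooth exponential map, the smooth action of $\mathcal{H}$, and the continuous distance function $d_{\overline{\mathcal{M}}}(E,\cdot)$. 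As it takes a value $<\rho$ at $(\xi_0,H_0)$, it remains $<\rho$ on a product neighborhood $\mathcal{U}_\xi\times\mathcal{U}_H$ of $(\xi_0,H_0)$, which is the desired conclusion.

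\textbf{Main obstacle.} The delicate point is the distance-to-fiber identity, and in particular its ``$\leq$'' direction: it rests on the global existence of horizontal lifts of minimizing geodesics, which requires completeness of $\overline{\mathcal{M}}$ and the structure theory of Riemannian submersions, together with the identification of the fiber as a single $\mathcal{H}$-orbit and the compactness of $\mathcal{H}$ to guarantee that the infimum over the orbit is attained. The continuity argument closing the proof is routine by comparison.
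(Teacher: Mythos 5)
Your proposal is correct and follows essentially the same route as the paper's proof: pick $\xi_0$ with $t_{\mathrm{c}}(\xi_0)<\rho$, use the distance-to-fiber identity (the paper cites~\cite[Proposition~5.3]{GuiguiMiolanePennec2023} where you sketch the two inequalities directly), invoke compactness of $\mathcal{H}$ to attain the infimum at some $H_0$, and conclude by continuity. The only cosmetic difference is that the paper explicitly cites Hermann's theorem for completeness of $\mathcal{M}$ and treats the fiber identity as a black box, whereas you unpack it.
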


\begin{proof}
Since $\overline{\mathcal{M}}$ is complete, $\mathcal{M}$ is also complete~\cite{Hermann1960}. Since $\rho > \mathrm{inj}_{\mathcal{M}}(\varphi(E))$, there exists $\xi \in \mathrm{U}_{\varphi(E)}\mathcal{M}$ such that $\rho > t_{\mathrm{c}}(\xi)$. Let $\gamma_\xi: t\mapsto \mathrm{Exp}_{\varphi(E)}(t\xi)$ and $\overline\gamma_\xi: t \mapsto \mathrm{Exp}_E(t\overline{\xi}_E)$ be its horizontal lift with $\overline\gamma_\xi(0) = E$. By definition of the cut time, $\rho > t_{\mathrm{c}}(\xi)$ is equivalent to $d_{\mathcal{M}}(\varphi(E), \gamma_\xi(\rho)) < \rho$. 
By~\cite[Proposition~5.3]{GuiguiMiolanePennec2023}, $d_{\mathcal{M}}(\varphi(E), \gamma_\xi(\rho)) = \inf_{H\in\mathcal{H}} d_{\overline{\mathcal{M}}}(E,\overline\gamma_\xi(\rho)H)$. Since the distance is continuous and $\mathcal{H}$ is compact, it follows by the extreme value theorem that
there exists $H\in\mathcal{H}$ such that $d_{\overline{\mathcal{M}}}(E, \overline\gamma_\xi(\rho)H) < \rho$. Since $d_{\overline{\mathcal{M}}}$, the group action of $\mathcal{H}$, and the function $\xi \mapsto \overline\gamma_\xi(\rho)$ are continuous, 
it follows that there exists a neighborhood $\mathcal{U}_\xi$ of $\xi$ in $\mathrm{U}_{\varphi(E)}\mathcal{M}$ and a neighborhood $\mathcal{U}_H$ of $H$ in $\mathcal{H}$ such that, for all $\tilde\xi \in \mathcal{U}_\xi$ and $\tilde{H} \in \mathcal{H}$, it holds that $d_{\overline{\mathcal{M}}}(E, \overline\gamma_{\tilde\xi}(\rho) \tilde{H}) < \rho$. 
\end{proof}

Now consider the case where $\mathcal{G}$ is not Riemannian ($\alpha < 0$, i.e., $\beta > \frac12$). Then Proposition~\ref{prp:probability-1} is no longer applicable. We suspect that Algorithm~\ref{alg:semi} still returns with probability 1 when $\rho > \mathrm{inj}_{\mathrm{St}_\beta(n,p)}$, but proof attempts are thwarted by the forthcoming Proposition~\ref{prp:log-not-shortest} and Proposition~\ref{prp:not-open}. In the next result, we again slightly abuse notation by letting $\exp_{\mathrm{m}}(\Omega,\Psi)$ denote $(\exp_{\mathrm{m}}(\Omega), \exp_{\mathrm{m}}(\Psi))$.
\begin{proposition}  \label{prp:log-not-shortest}
Let 
$\beta>1$, 
\[
\xi := \frac1{\sqrt{2\beta}} \begin{bmatrix} 0 & -1 \\ 1 & 0 \\ 0 & 0 \end{bmatrix} \in \mathrm{U}_{I_{3\times2}}\mathrm{St}_\beta(3,2),
\]
$\rho := \pi\sqrt{2\beta}$, and $U := \Exp_{I_{3\times2}}(\rho\xi) = -I_{3\times2} = \varphi(I_3,-I_2)$. Then there exists $\Xi_2 \in \exp_{\mathrm{m}}^{-1}(I_3,-I_2)$---necessarily with $\Xi_2 \notin \log_{\mathrm{m}}(I_3,-I_2)$---such that $L_{\Xi_2} < \inf\{L_\Xi \mid \Xi \in \log_{\mathrm{m}}(G), G\in\varphi^{-1}(U)\} = \rho$, where $L_\Xi$ is given by~\eqref{eq:L-noncanonical}. Hence $d_{\mathrm{St}_\beta(3,2)}(I_{3\times2},U) < \inf\{L_\Xi \mid \Xi \in \log_{\mathrm{m}}(G), G\in\varphi^{-1}(U)\} = \rho$.
\end{proposition}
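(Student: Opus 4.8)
The plan is to reduce the whole statement to the explicit length formula~\eqref{eq:L-noncanonical} and then carry out two computations: one over the principal logarithms of the fiber (which pins the infimum at $\rho$) and one over a single, carefully chosen non-principal logarithm (which beats it). First I would record the elementary facts. A direct application of~\eqref{eq:geod-Stiefel} with $Q = I_3$, skew block $A = \pi J$ where $J := \left[\begin{smallmatrix} 0 & -1 \\ 1 & 0\end{smallmatrix}\right]$, and $H = 0$ shows that $\Exp_{I_{3\times 2}}(\rho\xi)$ equals $R(2\beta\pi)R((1-2\beta)\pi) = R(\pi) = -I_2$ stacked over a zero row, confirming $U = -I_{3\times 2} = \varphi(I_3,-I_2)$; a one-line trace computation with~\eqref{eq:g} confirms $g(\xi,\xi) = 1$.

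For the infimum, I would parametrize the fiber. By~\eqref{eq:fiber} with base point $(I_3,-I_2)$, we have $\varphi^{-1}(U) = \{(\diag(R_1,1),-R_1) \mid R_1 \in \mathrm{SO}(2)\}$, where $R_1 = R(\phi)$. The minimal-Frobenius logarithm of $\diag(R(\phi),1)$ is $\diag(\phi' J, 0)$ with $\phi'$ the representative of $\phi$ in $(-\pi,\pi]$, so that $\Omega_{11} = \phi' J$ and, crucially, $\Omega_{21} = 0$; the logarithm of $-R(\phi) = R(\phi+\pi)$ is $\psi' J$ with $\psi'$ the representative of $\phi+\pi$. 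Substituting into~\eqref{eq:L-noncanonical} yields $L_\Xi^2 = 2\beta(\phi'-\psi')^2$. Since $\phi' - \psi' \equiv -\pi \pmod{2\pi}$ while both lie in $(-\pi,\pi]$, we get $\phi'-\psi' \in \{-\pi,\pi\}$, hence $L_\Xi \equiv \pi\sqrt{2\beta} = \rho$ for every fiber element, including the finitely many $\phi$ at which the minimal logarithm is non-unique. This establishes $\inf\{L_\Xi\} = \rho$.

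The heart of the argument is the competitor that beats $\rho$. The key observation is that $\exp_{\mathrm{m}}^{-1}(I_3)$ contains far more than $0$: every $3\times 3$ skew matrix whose nonzero eigenvalue pair is $\pm 2\pi i$, i.e.\ $2\pi$ times the cross-product matrix of a unit vector $(n_1,n_2,n_3)$, also exponentiates to $I_3$, and for a tilted axis its top-left block $2\pi n_3 J$ is nonzero. I would take $\Omega_2 = 2\pi$ times that cross-product matrix together with $\Psi_2 = \pi J$ (so that $\exp_{\mathrm{m}}(\Psi_2) = -I_2$), giving from~\eqref{eq:L-noncanonical} the scalar function $L_{\Xi_2}^2 = 2\beta\pi^2(2n_3-1)^2 + 4\pi^2(1-n_3^2)$. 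A one-variable minimization yields the optimizer $n_3 = \beta/(2\beta-1) \in (\tfrac12,1)$ and the value $L_{\Xi_2}^2 = 2\pi^2(3\beta-2)/(2\beta-1)$; the inequality $L_{\Xi_2}^2 < \rho^2 = 2\beta\pi^2$ then collapses to $2(\beta-1)^2 > 0$, which holds exactly because $\beta > 1$. Since $\Omega_2 \neq 0 = \log_{\mathrm{m}}(I_3)$, the matrix $\Xi_2 = (\Omega_2,\Psi_2)$ is not a principal logarithm (consistent with the ``necessarily'' clause, which also follows from the strict length gap), and because~\eqref{eq:L-noncanonical} is the length of the explicit curve $t\mapsto\varphi(\Exp_E(t\Xi_2))$ joining $I_{3\times 2}$ to $U$, we conclude $d_{\mathrm{St}_\beta(3,2)}(I_{3\times2},U) \leq L_{\Xi_2} < \rho$.

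The step I expect to be the main obstacle is precisely the choice of competitor in the third paragraph: recognizing that one should abandon the principal logarithm of the identity component and instead use a full $2\pi$ rotation about a \emph{tilted} axis, so that the freedom in $n_3$ trades the off-diagonal penalty $4\pi^2(1-n_3^2)$ against the $\Psi_2$-matching term, with the optimum beating $\rho$ if and only if $\beta > 1$. Everything else is routine: rotation-angle bookkeeping for the fiber, a single-variable optimization, and a brief justification (via the one-parameter-orbit structure, or simply~\eqref{eq:L-noncanonical} as already stated) that $L_{\Xi_2}$ is the length of an admissible curve and hence an upper bound on the distance. A minor care point is the non-uniqueness of $\log_{\mathrm{m}}$ at isolated $\phi$, which as noted does not affect the constant value $L_\Xi = \rho$.
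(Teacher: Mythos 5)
Your proposal is correct and follows essentially the same route as the paper: parametrize the fiber via \eqref{eq:fiber}, check that every principal logarithm yields $L_\Xi=\pi\sqrt{2\beta}$, and then beat this with a non-principal logarithm of $(I_3,-I_2)$ built from a full $2\pi$-rotation about a tilted axis. The only difference is presentational---you derive the competitor by minimizing $2\beta\pi^2(2n_3-1)^2+4\pi^2(1-n_3^2)$ over the axis tilt, whereas the paper writes down the optimizer directly (its $\Xi_2$ is exactly $2\pi$ times the cross-product matrix of a unit vector with $n_3=\beta/(2\beta-1)$, matching your optimum), and both reduce the final inequality to $2(\beta-1)^2>0$.
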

\begin{proof}
The fact that $\Exp_{I_{3\times2}}(\rho\xi) = -I_{3\times2}$ follows from~\eqref{eq:geod-Stiefel}. By~\eqref{eq:fiber}, we have
\begin{align*}
\varphi^{-1}(U) &= \left\{ \left( \begin{bmatrix} R_1 & 0 \\ 0 & R_2 \end{bmatrix}, -R_1 \right) \mid R_1 \in \mathrm{SO}(2), R_2 \in \mathrm{SO}(1) \right\} 
\\ &= \{ (\Omega(\theta),\Psi(\theta)) \mid \theta \in (-\pi,\pi] \},
\end{align*}
where 
\[
\Omega(\theta) := \begin{bmatrix} \cos\theta & -\sin\theta & 0 \\ \sin\theta & \cos\theta & 0 \\ 0 & 0 & 1 \end{bmatrix}
\quad \text{and} \quad
\Psi(\theta) := -\begin{bmatrix} \cos\theta & -\sin\theta \\ \sin\theta & \cos\theta \end{bmatrix}.
\]
We have 
\[
\log_{\mathrm{m}}(\Omega(\theta)) = \begin{cases}
\begin{bmatrix} 0 & -\theta & 0 \\ \theta & 0 & 0 \\ 0 & 0 & 0 \end{bmatrix} & \text{if $\theta \in (-\pi,\pi)$},
\\ \pm \begin{bmatrix} 0 & -\pi & 0 \\ \pi & 0 & 0 \\ 0 & 0 & 0 \end{bmatrix} & \text{if $\theta = \pi$},
\end{cases}
\]
and
\[
\log_{\mathrm{m}}(\Psi(\theta)) = \begin{cases}
\begin{bmatrix} 0 & -(\theta-\mathrm{sgn}(\theta)\pi)\\ \theta-\mathrm{sgn}(\theta)\pi & 0 \end{bmatrix} & \text{if $\theta \in (-\pi,\pi]\setminus\{0\}$},
\\ \pm \begin{bmatrix} 0 & -\pi \\ \pi & 0 \end{bmatrix} & \text{if $\theta = 0$}.
\end{cases}
\]
In view of~\eqref{eq:L-noncanonical}, for all $\theta \in (-\pi,\pi]$ and all $\Xi\in\log_{\mathrm{m}}(\Omega(\theta),\Psi(\theta))$, one finds $L_\Xi = \sqrt{\beta 2 \pi^2} = \pi \sqrt{2\beta}$. Hence $\inf\{L_\Xi \mid \Xi \in \log_{\mathrm{m}}(G), G\in\varphi^{-1}(U)\} = \pi \sqrt{2\beta}$. On the other hand, let \begin{equation*}
    \Xi_2 := \left(\begin{bmatrix}
    0&a\pi&-b\pi\\
    -a\pi&0&-b\pi\\
    b\pi&b\pi&0
    \end{bmatrix},\begin{bmatrix}0&-\pi\\ \pi&0\end{bmatrix}\right),
    \end{equation*}
    with $a := \frac{2\beta}{1-2\beta}$ and $b := \sqrt{2\left(1-\frac{\beta^2}{(1-2\beta)^2}\right)}$, where $b$ is a real number since $\beta>1$. It can be checked that $\Xi_2 \in \exp_{\mathrm{m}}^{-1}(I_3,-I_2)$
    and 
\[
    L_{\Xi_2} = \sqrt{\beta2(a+1)^2\pi^2 + 2b^2\pi^2}=\pi\sqrt{\frac{2\beta}{(1-2\beta)^2}+4\left(1-\frac{\beta^2}{(1-2\beta)^2}\right)}.
\]
This yields $L_{\Xi_2} < \pi\sqrt{2\beta}$ for $\beta>1$. 
\end{proof}
Proposition~\ref{prp:log-not-shortest} exhibits $\xi \in \mathrm{U}_{I_{3\times2}}\mathrm{St}_\beta(3,2)$ and $\rho$ such that $\rho > t_{\mathrm{c}}(\xi)$
yet line~\ref{alg:semi:Xi} of Algorithm~\ref{alg:semi} yields $L_\Xi = \rho$ regardless of $G$; hence lines~\ref{alg:semi:U}--\ref{alg:semi:LXi} of Algorithm~\ref{alg:semi} never reveal that $\rho > t_{\mathrm{c}}(\xi)$ for these $\xi$ and $\rho$.
However, for all $U \in \mathrm{St}_\beta(n,p)$, by the Hopf--Rinow theorem, there is a geodesic $\gamma$ in $\mathrm{St}_\beta(n,p)$ from $I_{n\times p}$ to $U$ with length $d_{\mathrm{St}_\beta(n,p)}(I_{n\times p},U)$. This geodesic admits a horizontal lift $\overline\gamma$ through $I_{n\times p}$, which is a (possibly nonminimal) geodesic of $\mathcal{G}$, and thus $d_{\mathrm{St}_\beta(n,p)}(I_{n\times p},U) 
= L_{\overline\gamma'(0)} 
= \min \{L_\Xi \mid \Xi \in \exp_{\mathrm{m}}^{-1}(G), G\in\varphi^{-1}(U)\}$ (observe the $\exp_{\mathrm{m}}^{-1}(G)$ whereas Proposition~\ref{prp:log-not-shortest} has $\log_{\mathrm{m}}(G)$), where the $\min$ is attained at $\Xi
= \overline\gamma'(0)$,
$G = \overline\gamma(1)$. A lead would thus be, in line~\ref{alg:semi:Xi} of Algorithm~\ref{alg:semi}, to choose $\Xi$ in $\exp_{\mathrm{m}}^{-1}(G)$; if $\rho > t_{\mathrm{c}}(\xi)$, then lines~\ref{alg:semi:U}--\ref{alg:semi:LXi} of Algorithm~\ref{alg:semi} would reveal it for favorably-chosen $G$ and $\Xi$ by producing $L_\Xi < \rho$. However, leaving aside that $\exp_{\mathrm{m}}^{-1}(G)$ becomes an intricate uncountable set in case of multiple eigenvalues, our attempts to show that $G$ would be favorably chosen with nonzero probability encountered the following hurdle.
\begin{proposition}  \label{prp:not-open}
For $n \geq 3$, the matrix exponential $\exp_{\mathrm{m}}: \mathcal{S}_{\mathrm{skew}}(n) \to \mathrm{SO}(n)$ is not an open map.
\end{proposition}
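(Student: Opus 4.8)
The plan is to prove non-openness by exhibiting a single open set whose image is not a neighborhood of one of its points, working near a place where $\exp_{\mathrm{m}}$ degenerates. First I would fix the skew-symmetric matrix $\Omega_0 := 2\pi(E_{1,2}-E_{2,1}) \in \mathcal{S}_{\mathrm{skew}}(n)$, which generates a full $2\pi$ rotation in the $(1,2)$-plane and the identity elsewhere, so that $\exp_{\mathrm{m}}(\Omega_0) = I_n$. This is a natural candidate because $\mathrm{D}\exp_{\mathrm{m}}(\Omega_0)$ is singular: the operator $\mathrm{ad}_{\Omega_0}$ has eigenvalues $\pm 2\pi i$ on the ``mixed'' directions coupling $\{e_1,e_2\}$ to $\{e_3,\dots,e_n\}$, and these directions lie in the kernel of the differential. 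I would take as candidate open set the Frobenius ball $U := \{\Omega \in \mathcal{S}_{\mathrm{skew}}(n) \mid \|\Omega-\Omega_0\|_{\mathrm{F}} < \pi\}$, which contains $\Omega_0$ and hence satisfies $I_n \in \exp_{\mathrm{m}}(U)$.

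The heart of the argument is to show that $\exp_{\mathrm{m}}(U)$ omits rotations arbitrarily close to $I_n$. To this end I would consider the family $R_\delta := \exp_{\mathrm{m}}(\delta(E_{1,3}-E_{3,1}))$, a rotation by angle $\delta \in (0,\pi)$ in the $(1,3)$-plane; clearly $R_\delta \neq I_n$ and $R_\delta \to I_n$ as $\delta \to 0^{+}$. The key step is to control \emph{every} preimage of $R_\delta$ under $\exp_{\mathrm{m}}$, not merely first-order data. Since $R_\delta = \exp_{\mathrm{m}}(\Omega)$ is a power series in $\Omega$, any such $\Omega$ commutes with $R_\delta$ and therefore preserves each eigenspace of $R_\delta$. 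For $\delta \in (0,\pi)$ the eigenvalues $e^{\pm i\delta}$ of $R_\delta$ are simple and distinct from the eigenvalue $1$ (of multiplicity $n-2$), so $\Omega$ preserves the real plane $P := \operatorname{span}(e_1,e_3)$. Consequently $\Omega e_1 \in P$, which is orthogonal to $e_2$, forcing $\Omega_{2,1} = \langle e_2, \Omega e_1\rangle = 0$. Since $(\Omega_0)_{2,1} = -2\pi$, this yields $\|\Omega-\Omega_0\|_{\mathrm{F}} \geq |\Omega_{2,1}-(\Omega_0)_{2,1}| = 2\pi > \pi$, so no logarithm of $R_\delta$ lies in $U$; that is, $R_\delta \notin \exp_{\mathrm{m}}(U)$ for all $\delta \in (0,\pi)$.

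To finish, I would observe that $\exp_{\mathrm{m}}(U)$ contains $I_n$ but none of the points $R_\delta$, which accumulate at $I_n$; hence $\exp_{\mathrm{m}}(U)$ is not a neighborhood of $I_n$ and is therefore not open, so $\exp_{\mathrm{m}}$ is not an open map (the construction uses coordinates $e_1,e_2,e_3$, which requires $n \geq 3$). The step I expect to be the main obstacle is precisely the one handled by the commutation/eigenspace argument: singularity of $\mathrm{D}\exp_{\mathrm{m}}(\Omega_0)$ by itself does \emph{not} imply non-openness, as the example $z\mapsto z^2$ on $\mathbb{C}$ shows, so first-order ``squishing'' information is insufficient and one must instead rule out all higher-order preimages globally. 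Reducing this to the single scalar constraint $\Omega_{2,1}=0$, via the fact that every logarithm must preserve the plane $P$, is what turns the local degeneracy into a genuine quantitative obstruction.
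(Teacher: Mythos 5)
Your proof is correct. The overall skeleton matches the paper's: both arguments center the construction at the $2\pi$-rotation generator $\Omega_0$ with $\exp_{\mathrm{m}}(\Omega_0)=I_n$, exhibit a one-parameter family of small rotations converging to $I_n$ in a plane that shares exactly one axis with the $(1,2)$-plane, and show that these rotations have no preimage near $\Omega_0$, so the image of the chosen neighborhood fails to be a neighborhood of $I_n$. Where you genuinely differ is in the key lemma that excludes the preimages. The paper uses eigenvector perturbation theory (continuity of the eigenvector associated with the simple eigenvalue $2\pi\imath$ of $\Omega_0$, via~\cite[Theorem~4.11]{Ste73}) to argue that every $\tilde\Omega$ in a sufficiently small, non-explicit neighborhood produces an exponential with an eigenvector near $(1,\imath,0)^\top$, which the target rotations lack. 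You instead use the elementary fact that any logarithm $\Omega$ of $R_\delta$ commutes with $R_\delta$ and hence, because the eigenvalues $e^{\pm\imath\delta}$ are simple for $\delta\in(0,\pi)$, preserves the real plane $\operatorname{span}(e_1,e_3)$; this forces $\Omega_{2,1}=0$ and yields the explicit quantitative bound $\|\Omega-\Omega_0\|_{\mathrm{F}}\geq 2\pi$, so the explicit Frobenius ball of radius $\pi$ works. Your route is more self-contained (no spectral perturbation theorem) and gives a concrete open set rather than a ``small enough'' one; the paper's route generalizes with no change to any perturbation $\tilde\Omega$ of $\Omega_0$, not only exact logarithms of the target rotations, which is why it phrases the obstruction in terms of eigenvectors of $\exp_{\mathrm{m}}(\tilde\Omega)$. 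You also correctly flag, as the proposition requires, that singularity of $\mathrm{D}\exp_{\mathrm{m}}(\Omega_0)$ alone would not suffice.
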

\begin{proof}
We prove the claim for $n=3$; the proof readily extends to higher dimensions. Let 
\[
\Omega := \begin{bmatrix}  0 & -2\pi & 0 \\ 
    2\pi & 0 & 0 \\
    0 & 0 & 0 \end{bmatrix} = \begin{bmatrix} 1 & 1 & 0 \\
        \imath & -\imath & 0 \\
        0 & 0 & 1 \end{bmatrix} 
        \mathrm{diag}(\imath 2\pi, -\imath 2\pi, 0)
        \begin{bmatrix} 1 & 1 & 0 \\
            \imath & -\imath & 0 \\
            0 & 0 & 1 \end{bmatrix}^{-1},         
\]
where the last expression is an eigendecomposition. 
Let 
\[
Q(\epsilon) := \begin{bmatrix} 1 & 0 & 0 \\
    0 & \cos\epsilon & -\sin\epsilon \\
    0 & \sin\epsilon & \cos\epsilon \end{bmatrix}
     = \begin{bmatrix} 1 & 0 & 0 \\
        0 & 1 & 1 \\
        0 & \imath & -\imath \end{bmatrix}
    \mathrm{diag}(1, e^{-\imath\epsilon}, e^{\imath\epsilon})
    \begin{bmatrix} 1 & 0 & 0 \\
        0 & 1 & 1 \\
        0 & \imath & -\imath \end{bmatrix}^{-1},
\]
where again the last expression is an eigendecomposition. Observe that $Q(\epsilon) \in \mathrm{SO}(n)$ for all $\epsilon$.
Let $\mathcal{U}_\Omega \ni \Omega$ be a relatively open subset of $\mathcal{S}_{\mathrm{skew}}(n)$, small enough that every $\tilde\Omega \in \mathcal{U}_\Omega$ retains an eigenvector within distance, say, 0.1 of $v := \begin{bmatrix} 1 & \imath & 0 \end{bmatrix}^\top$; because 
the corresponding eigenvalue $\imath 2\pi$ of $\Omega$ is simple,
such a subset exists as a consequence of~\cite[Theorem~4.11]{Ste73}. Hence, for all $\tilde\Omega \in \mathcal{U}_\Omega$, $\exp_{\mathrm{m}}(\tilde\Omega)$ has an eigenvector within distance 0.1 of $v$. Whenever $0<|\epsilon|<\pi$, $Q(\epsilon)$ has all simple eigenvalues, hence it admits only three eigendirections, which are revealed by the above eigendecomposition; it is readily seen that no eigenvector of $Q(\epsilon)$ lies within distance of 0.1 of $v$, hence $Q(\epsilon) \in \mathrm{SO}(n) \setminus \exp_{\mathrm{m}}(\mathcal{U}_\Omega)$. However, $\lim_{\epsilon\to0} Q(\epsilon) = I_3 = \exp_{\mathrm{m}}(\Omega) \in \exp_{\mathrm{m}}(\mathcal{U}_\Omega)$. Hence $\exp_{\mathrm{m}}(\mathcal{U}_\Omega)$ is not open in $\mathrm{SO}(n)$.
\end{proof}
Nevertheless, we suspect that a result akin to Proposition~\ref{prp:probability-1} holds, namely, when $\rho > \mathrm{inj}_{\mathrm{St}_{\beta}(n,p)}$, there would exist an open set $\mathcal{U}_\xi$ in $\mathrm{U}_{\varphi(E)}\mathcal{M}$ and an open set $\mathcal{U}_H$ in $\mathcal{H}$ such that, for all $\tilde\xi \in \mathcal{U}_\xi$ and all $\tilde{H} \in \mathcal{U}_H$, it would hold that $L_{\log_{\mathrm{m}}(\mathrm{Exp}_E(\rho \overline{\tilde\xi}_E) \tilde{H})} < \rho$. 
This concludes the discussion of Algorithm~\ref{alg:semi} in the case where $\mathcal{G}$ is not Riemannian ($\alpha < 0$, i.e., $\beta > \frac12$).

\begin{figure}[h]
    \centering
    \begin{minipage}{0.45\textwidth}
        \centering
        \includegraphics[width = 1\textwidth]{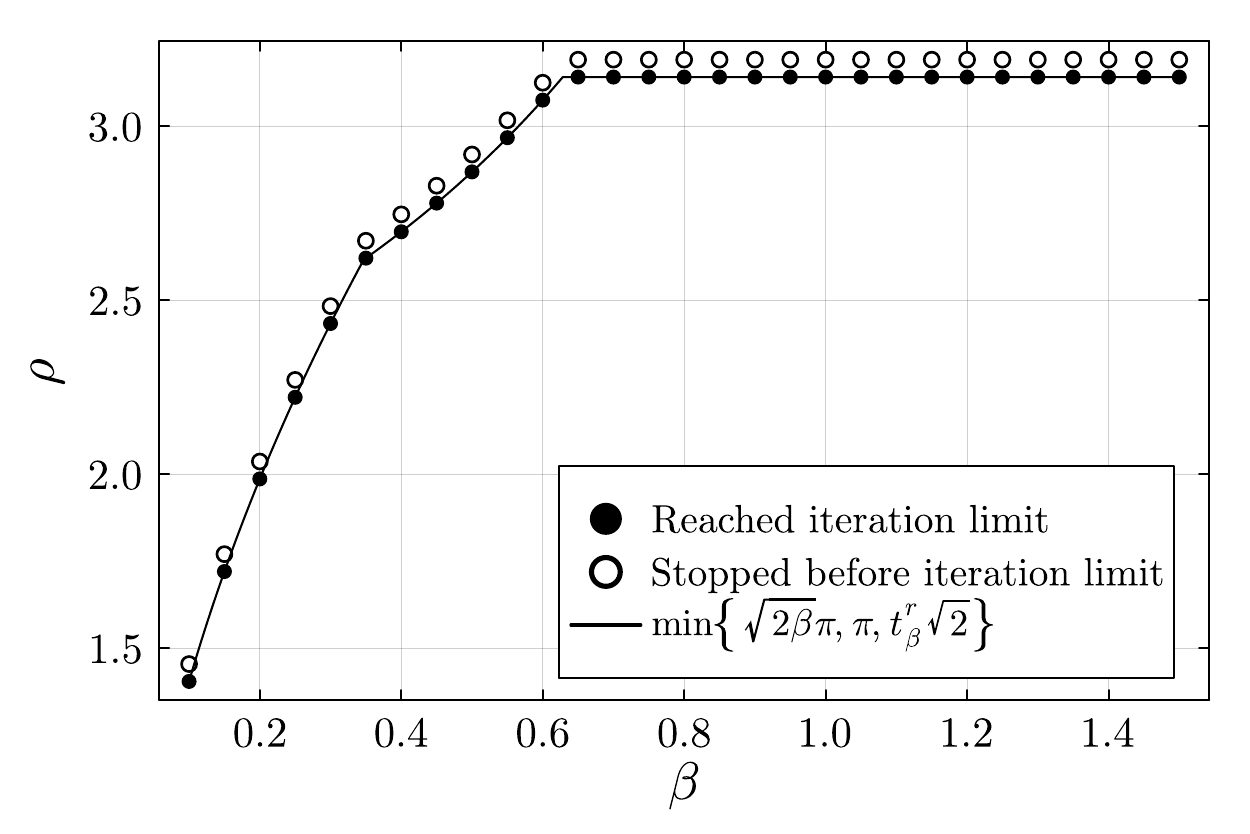} 
        \caption{Numerical experiments using Algorithm~\ref{alg:semi} for values of $\beta$ going from $0.1$ to $1.5$, spaced by $0.05$. For each value of $\beta$, we ran an experiment for $\rho = \hat{\imath}_\beta$ and $\rho = \hat{\imath}_\beta + 0.05$, where $\hat{\imath}_\beta$ is the upper bound defined in~\eqref{eq:inj}. The plot reports a black dot if Algorithm~\ref{alg:semi} reached a prescribed iteration limit (as large as possible, subject to the figure being generated within reasonable time), and a white dot if Algorithm~\ref{alg:semi} returned (hence providing a certificate that $\rho > \mathrm{inj}_{\mathrm{St}_\beta(n,p)}$). This figure was produced with $(n,p)=(4,2)$. We obtained the same figure for other values of $n$ and $p$ with $2\leq p \leq n-2$.}
        \label{fig:beta-rho}
    \end{minipage} \hfill
    \begin{minipage}{0.45\textwidth}
        \centering
        \includegraphics[width = \textwidth]{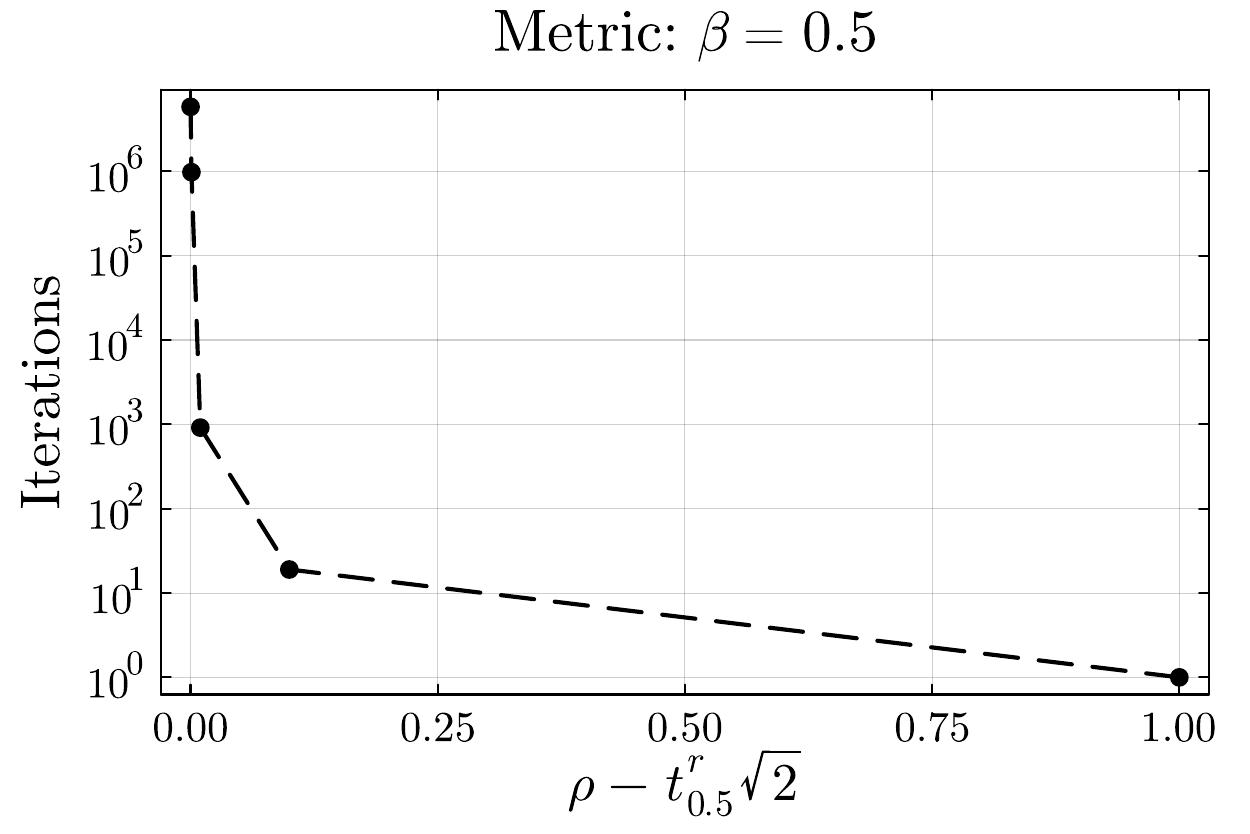}
        \caption{Numerical experiments using Algorithm~\ref{alg:semi} with $(n,p)=(4,2)$ and $\beta=0.5$ for $\rho = \hat{\imath}_\beta+\delta$ where $\delta\in\{1,0.1,0.01,0.001,0.0001\}$. The plot reports the number of iterations required by Algorithm~\ref{alg:semi} to return, as observed in representative runs. Note that, in view of the random nature of Algorithm~\ref{alg:semi}, the values differ between runs. The purpose of this figure is to show how a typical return time evolves as the upper dot located above $\beta=\frac12$ in Figure~\ref{fig:beta-rho} is brought closer to the line.
    }
    \label{fig:random_shooting_05}
    \end{minipage}
\end{figure}

Figures~\ref{fig:beta-rho} and~\ref{fig:random_shooting_05}, as well as
other figures available together with the code\footnote{\url{https://github.com/smataigne/InjectivityStiefel.jl}. Figures of the type of Figure~\ref{fig:random_shooting_05} vary between runs due to the two ``Draw'' instructions in Algorithm~\ref{alg:semi}. For the same reason, any of the white dots in figures of the type of Figure~\ref{fig:beta-rho} may turn to black, but this is all the more unlikely that the prescribed iteration limit is large. On the other hand, if any of the black dots turns to white, then either Conjecture~\ref{conj:inj} is disproved, or the reason has to be found in the inaccuracies in the execution of the steps of Algorithm~\ref{alg:semi}.} that produced them, support the following conjecture. 
\begin{conjecture}  \label{conj:inj}
\eqref{eq:inj-St-n-1} and~\eqref{eq:inj} hold with an equality.
\end{conjecture}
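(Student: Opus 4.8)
The plan is to establish the two matching lower bounds that, combined with the upper bounds of Sections~\ref{sec:conjugate} and~\ref{sec:loops} and the identity~\eqref{eq:inj-ell-conj}, pin down the injectivity radius. Concretely, since $\mathrm{inj}_{\mathrm{St}_\beta(n,p)} = \min\{\tfrac12\ell_{\mathrm{St}_\beta(n,p)}, \mathrm{conj}_{\mathrm{St}_\beta(n,p)}\}$ and the sign analysis already carried out for $\hat{\imath}_\beta$ fixes which term dominates in each $\beta$-regime, it suffices to prove the two reverse inequalities: (i) $\mathrm{conj}_{\mathrm{St}_\beta(n,p)} \geq t^{\mathrm{r}}_\beta\sqrt2$, and (ii) every nontrivial geodesic loop has length at least $\min\{\sqrt{2\beta},1\}\,2\pi$. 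The upper bounds~\eqref{eq:conjugate} and~\eqref{eq:ell} then become equalities, and the three-way minimum~\eqref{eq:inj-beta} yields~\eqref{eq:inj}; the boundary case $p=n-1$ of~\eqref{eq:inj-St-n-1} follows from (ii) alone, no conjugate point constraining it below $\min\{\sqrt{2\beta},1\}\pi$.

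For (i), I would first exploit the invariance of $g$ under the residual isometries $X \mapsto \mathrm{diag}(V,W)\,X\,V^\top$ with $(V,W)\in\mathrm{O}(p)\times\mathrm{O}(n-p)$, which act on a unit direction $\xi = Q\left[\begin{smallmatrix}A\\H\end{smallmatrix}\right]$ by $A\mapsto V^\top A V$ and $H \mapsto WHV$, to reduce an arbitrary $\xi$ to a normal form (say $A$ in real block-diagonal form and $H$ reduced through its singular value decomposition). Fixing such a normal form, I would substitute $\Omega = \left[\begin{smallmatrix} 2\beta A & -H^\top \\ H & 0\end{smallmatrix}\right]$ and $\Psi = -(1-2\beta)A$ into the Karplus--Schwinger formula~\eqref{eq:Dexp}, exactly as in the proofs of Lemmas~\ref{lmm:DexpOmega}--\ref{lmm:DexpOmegaPsi-Euc}, but now retaining the full matrix blocks rather than the rank-structured $D$. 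The objective is to show that $\breve\xi \mapsto \mathrm{D}\Exp_X(t\xi;\breve\xi)$ is injective for every unit $\xi$ and every arc-length $t\in(0,t^{\mathrm{r}}_\beta\sqrt2)$, by reducing its singularity to a family of scalar transcendental conditions of the type $\tfrac{\sin s}{s} + (1+2\alpha)\cos s = 0$ indexed by the invariants of $(A,H)$, and verifying that none produces a conjugate point before the distinguished one of Theorem~\ref{thm:conjugate}. Equivalently, and perhaps more tractably, I would pass to Jacobi fields: using the curvature tensor of $g$ available in~\cite{nguyen2021curvatures}, show via the index form that the first zero of every Jacobi field along a unit-speed geodesic occurs no earlier than $t^{\mathrm{r}}_\beta\sqrt2$.

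For (ii), I would generalize Theorem~\ref{thm:loop-can} to $\beta\neq\tfrac12$. A loop at $I_{n\times p}$ is a nontrivial solution of $\exp_{\mathrm{m}}(\Omega)\,I_{n\times p}\,\exp_{\mathrm{m}}((1-2\beta)A) = I_{n\times p}$ of length $\sqrt{\beta\,\mathrm{trace}(A^\top A) + \mathrm{trace}(H^\top H)}$. Rewriting the loop condition as the requirement that the first $p$ columns of $\exp_{\mathrm{m}}(\Omega)$ equal $I_{n\times p}\exp_{\mathrm{m}}(-(1-2\beta)A)$, I would diagonalize $\Omega$ and track its rotation angles as in the canonical proof: the condition forces at least one conjugate angle pair $\pm\theta$ of $\Omega$ to be matched against an eigenvalue of the coupling factor $\exp_{\mathrm{m}}(-(1-2\beta)A)$, and a careful accounting of the spectra of $\Omega$ and of that factor should bound $\|\Omega\|_{\mathrm{F}}$, and hence the length, from below. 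The two subcases $\beta\le\tfrac12$ (where the length-$2\pi$ loop along $E_{n,1}$ may beat the Givens loop) and $\beta\ge\tfrac12$ must be tracked separately to recover the $\min\{\sqrt{2\beta},1\}\,2\pi$ form.

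The hard part, in both (i) and (ii), is the nonlinear coupling between the $A$- and $H$-blocks through the matrix exponential when $\beta\neq\tfrac12$: the isometry normal form does not diagonalize the geodesic, so the conjugate-point equation and the loop equation are genuinely matrix-valued and do not split into the scalar $2\times2$ pieces handled by the preparatory lemmas. Establishing that the distinguished directions of Theorem~\ref{thm:conjugate} and Section~\ref{sec:loops} are \emph{global} minimizers, rather than merely critical configurations, requires a monotonicity or convexity argument over the entire parameter space of directions together with the unbounded geodesic parameter $t$. It is precisely this uniform lower bound over a non-compact, high-dimensional family that the numerical experiments of Section~\ref{sec:numerical} only sample, and that remains the crux of the conjecture.
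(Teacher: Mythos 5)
You are being asked about a statement that the paper itself labels a \emph{conjecture} and does not prove: the paper's only support for it is the numerical evidence of Section~\ref{sec:numerical} (Algorithm~\ref{alg:semi} and Figures~\ref{fig:beta-rho} and~\ref{fig:random_shooting_05}), plus the remark that the case $\beta=1$ was settled shortly after submission in the eprint~\cite{zimmermann2024injectivity}. Your proposal is likewise not a proof. You correctly identify what a proof would require---the two reverse inequalities (i) a matching lower bound on $\mathrm{conj}_{\mathrm{St}_\beta(n,p)}$ and (ii) the lower bound $\ell_{\mathrm{St}_\beta(n,p)} \geq \min\{\sqrt{2\beta},1\}\,2\pi$, which together with~\eqref{eq:inj-ell-conj} and the paper's upper bounds would pin down the injectivity radius---but you establish neither. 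For (i) one must exclude conjugate points along \emph{every} unit-speed geodesic before arc length $\min\{t^{\mathrm{r}}_\beta,\pi\}\sqrt2$, not merely along the distinguished direction of Theorem~\ref{thm:conjugate}; the preparatory lemmas only treat the special rank-structured blocks, and, as you concede, the general $(A,H)$ coupling through the matrix exponential does not reduce to them. For (ii) one must bound from below the length of \emph{all} nontrivial geodesic loops for $\beta\neq\frac12$, where the factors $\exp_{\mathrm{m}}(\Omega)$ and $\exp_{\mathrm{m}}((1-2\beta)A)$ are coupled and the eigenvalue argument of Theorem~\ref{thm:loop-can} does not carry over. Your own closing paragraph says that this uniform lower bound ``remains the crux of the conjecture''; a plan that defers its essential steps is not a proof, and as written your text adds nothing beyond what Theorems~\ref{thm:conjugate} and~\ref{thm:inj} already establish.

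There is also a small but genuine error in the statement of step (i): you assert $\mathrm{conj}_{\mathrm{St}_\beta(n,p)} \geq t^{\mathrm{r}}_\beta\sqrt2$ without the $\min$ with $\pi$. For $\beta>1$ the function $F(\beta,\cdot)$ of~\eqref{eq:F-beta-t} has no root in $(0,\pi]$ and its first positive root lies in $(\pi,\tfrac{3\pi}{2})$, so $t^{\mathrm{r}}_\beta>\pi$ there; since Theorem~\ref{thm:conjugate} shows $\gamma(0)$ and $\gamma(\pi)$ are conjugate, we already know $\mathrm{conj}_{\mathrm{St}_\beta(n,p)} \leq \pi\sqrt2 < t^{\mathrm{r}}_\beta\sqrt2$, so the inequality you propose to prove is false in that regime. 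The correct target is $\mathrm{conj}_{\mathrm{St}_\beta(n,p)} \geq \min\{t^{\mathrm{r}}_\beta,\pi\}\sqrt2$. This does not affect the final three-way minimum (for $\beta\geq\beta_2$ the loop term $\pi$ dominates anyway), but it illustrates that even the roadmap needs care before the hard analytic work begins.
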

As mentioned in Section~\ref{sec:introduction}, the conjecture was proved in the recent eprint~\cite{zimmermann2024injectivity} for $\beta = 1$.

Although the experiments strikingly corroborate Conjecture~\ref{conj:inj}, they do not qualify as a proof for several reasons: only a few pairs $(n,p)$ are considered; only finitely many points in the $(\beta,\rho)$ space are tested; the time budget may have been chosen too small for some of the sampled $(\beta,\rho)$'s, leading to the incorrect belief that Algorithm~\ref{alg:semi} does not return for those $(\beta,\rho)$'s; the accuracy of several operations in Algorithm~\ref{alg:semi} depends on the floating point accuracy and on the tolerance set in the algorithms that compute the matrix exponential and logarithm. Moreover, when $\beta>\frac12$, there is no proof that Algorithm~\ref{alg:semi} returns with probability 1 whenever $\rho > \mathrm{inj}_{\mathrm{St}_\beta(n,p)}$; hence, when $\beta>\frac12$, even if we became confident that Algorithm~\ref{alg:semi} does not return for some $\rho$, this would not imply that $\mathrm{inj}_{\mathrm{St}_\beta(n,p)} \geq \rho$.

Nevertheless, it is tempting to bet that the upper bounds~\eqref{eq:inj-St-n-1} and~\eqref{eq:inj} will never be improved.

\section{Concluding remarks}
\label{sec:conclusion}

The first few days of March 2024 have seen an intense activity around the Stiefel manifold. First, the eprint~\cite{zimmermann2024high} was submitted, 
proving that, whenever $2 \leq p \leq n-2$, the supremum of the sectional curvature of $\mathrm{St}(n,p)$ with the canonical metric (i.e.,~\eqref{eq:g} with $\beta=\frac12$) is $5/4$. As shown in~\cite[v2, Corollary~10]{zimmermann2024high}, it follows that $\mathrm{inj}_{\mathrm{St}_{\beta = \frac12}(n,p)} \geq \sqrt{\frac45} \pi \approx 0.89442719099991\,\pi$. 

A few hours later, the first eprint (see \href{https://arxiv.org/abs/2403.02079}{arXiv:2403.02079}) of the present work was submitted. Combining~\cite[v2, Corollary~10]{zimmermann2024high} with Corollary~\ref{cor:can-Euc} yields the following bounds on the injectivity radius of the Stiefel manifold with the canonical metric: 
\[
    0.894\,\pi 
    \ < \ \sqrt{\frac45}\,\pi \ \leq \  \mathrm{inj}_{\mathrm{St}_{\beta = \frac12}(n,p)} \ \leq \ t^{\mathrm{r}}_{\beta = \frac12} \sqrt2 \ < \ 
    0.914\,\pi
\] 
for $2 \leq p \leq n-2$, where $t^{\mathrm{r}}_{\beta = \frac12}$ is the first positive solution of $\frac{\sin t}{t} + \cos t = 0$. Moreover, according to Conjecture~\ref{conj:inj}, the injectivity radius is believed to be equal to the endpoint $t^{\mathrm{r}}_{\beta = \frac12} \sqrt2$ of the interval.

Two days later, the eprint~\cite{stoye2024injectivity} was submitted, giving explicit expressions for all Jacobi fields along a specific geodesic of $\mathrm{St}_{\beta = \frac12}(n=4,p=2)$.
These expressions indicate that, when $(n,p) = (4,2)$ and $\beta = \frac12$, the conjugate points given by Theorem~\ref{thm:conjugate} are first conjugate points (which, in view of Theorem~\ref{thm:ab}, has to be the case if Conjecture~\ref{conj:inj} holds), and that their multiplicity is $1$.

One can expect that more is to come, as the Riemannian geometries of the Stiefel manifold still hold many secrets. 

\bibliographystyle{siamplain}
\bibliography{../../bib/pabib}

\end{document}